\documentclass[12pt]{article}

\usepackage{mathtools}
\usepackage[demo]{graphicx} 
\usepackage{hyperref}
\usepackage{multirow}
\usepackage{float}
\usepackage{diagbox}
\usepackage{indentfirst}
\usepackage{tikz}
\usepackage{dsfont}
\usepackage{amssymb}
\usepackage{amsmath,mathrsfs,amsfonts}
\usepackage{graphics}
\usepackage{amsthm}
\usepackage{setspace}
\usepackage{setspace}
\usepackage[mathscr]{eucal}
\usepackage{caption}
\usepackage{subcaption}
\usepackage{rotating}
\usetikzlibrary{decorations.pathreplacing}
\newtheorem{theo}{Theorem}

\newtheorem{lem}[theo]{Lemma}
\newtheorem{corollary}[theo]{Corollary}

\newtheorem{problem}[theo]{Problem}
 \theoremstyle{definition}

\theoremstyle{remark}
\newtheorem{rema}[theo]{Remark}

\def\p{\overrightarrow{p}}

\newcounter{casenum}[theo]

\newcounter{subcasenum}[theo]

\newcounter{claimnum}[theo]

\allowdisplaybreaks

\begin{document}
\thispagestyle{plain}

\begin{center}
{\large Connected graphs minimizing the spectral radius for\\ given order and dissociation number}
\end{center}
\pagestyle{plain}
\begin{center}
	{
		{\small  Zejun Huang, Jiahui Liu, Chenxi Yang \footnote{  Corresponding author. \\ Email: zejunhuang@szu.edu.cn (Huang), mathjiahui@163.com (Liu), yangchenxi2022@email.szu.edu.cn (Yang)}}\\[3mm]
		{\small   School of Mathematical Sciences, Shenzhen University, Shenzhen 518060, China }\\
		
	}
\end{center}
\begin{center}
\begin{minipage}{140mm}
\begin{center}
{\bf Abstract}
\end{center}
A dissociation set in a graph is a subset of vertices which induces a subgraph with maximum  degree at most one. The dissociation number of a graph  is the maximum cardinality of its dissociation sets. In this paper, we consider the  $n$-vertex connected graphs with a given dissociation number that attain the minimum spectral radius. By using structure analysis and constructing difference equations, we characterize the extremal graphs with dissociation number $n-3$.\\
{\small
{\bf Keywords:} connected graph; dissociation number; extremal graph; spectral radius
}

\end{minipage}
\end{center}
\section{Introduction}

Graphs in this paper are simple, connected and undirected. Let $G = (V(G),E(G))$ be a graph. We denoted  by $d_G(v)$ or $d(v)$  the degree of a vertex $v\in V(G)$,   $G[S]$ the subgraph of $G$ induced by a subset $S\subseteq V(G)$, $G-S$  the induced subgraph $G[V\setminus S]$, which is also written as  $G-v$ when $S=\{v\}$ is a singleton. Let $\rho(G)$ be the spectral radius of $G$, which is the spectral radius of the adjacency matrix $A_G$ of $G$. For a symmetric matrix $B$, we denote by $\lambda_1(B)$ the largest eigenvalue of $B$. By the famous Perron-Frobenius theorem, we have $\rho(G)=\lambda_1(A_G)$ and it has a unique unit eigenvector with all components positive, which is called the {\it Perron vector } of $G$.

Given a set of graphs, it is natural to consider the possible values of certain parameters on these graphs. In 1986, Brualdi and Solheid \cite{BS} posed the problem on determining the maximum or minimum eigenvalue among all matrices in a subset of 0-1 matrices of order $n$. Notably, finding the maximum spectral radius of adjacency matrices for a certain class of graphs and characterizing the extremal graphs is a specific subproblem of this broader question. This problem has attracted significant research attention. The extremal graphs that attain the maximal spectral radius for specific invariants such as independence numbers, clique numbers, matching numbers, diameters  and domination numbers  have been extensively studied in previous works \cite{Feng,FENG2007133,HS,JL,SAH,vD}.

 For the minimization part of the Brualdi-Solheid type problem, it is much more challenging and complicated.   In 2009, Xu, Hong, Shu and Zhai \cite{XU2009937}  studied this type of problem for  connected graphs of order $n$ with a given independence number $\alpha$. They solved the problem for  $\alpha\in\{1,~2,~\lceil n/2\rceil,~\lceil n/2\rceil+1,~n-3,~n-2,~n-1\}$; Du and Shi \cite{Du2013GraphsWS} solved the problem when  $\alpha=3,4$ and the order $n$ is divided by $\alpha$;   Lou and Guo \cite{LOU2022112778} solved the problem for the case  $\alpha=n-4$ and they proved that the extremal graphs are trees when $\alpha\ge \lceil n/2\rceil$;   Hu, Huang and Lou \cite{hu2022graphs} established the structural properties and provided a constructive theorem to determine the  connected graphs of order $n$ attaining the minimum spectral radius when $\alpha\ge\left\lceil n/2\right\rceil$, and they explicitly identified these graphs along with their spectral radius for the cases $\alpha=n-5,n-6$; Choi and Park \cite{choi2023minimal}  solved the problem for the case $\alpha=\lceil n/2\rceil-1$. For other cases, the problem is still open.

A subset $S\subseteq V(G)$ is a {\it dissociation set} if it induces a subgraph with maximum degree at most 1. A {\it maximum dissociation set} is a dissociation set with maximum cardinality, whose cardinality is its {\it dissociation number}, denoted $\operatorname{diss}(G)$. Problems on dissociation number and dissociation sets have been extensively investigated; see \cite{bock2022bound,bock2022relating,BPD,das2023spectral,ORLOVICH20111352,SL,TU2022127107, TZS, doi:10.1137/0210022}. Notice that the dissociation number is a generalization of the independence number. It is  natural   to study the following problem of characterizing the extremal connected graphs of order $n$ with a given dissociation number.
\begin{problem}\label{pro1}
Let $n,\psi$ be integers such that $2\le \psi \le n$.
Denote by $ \mathcal{G}_{n,\psi}$ the connected graphs of order $n$ with dissociation number $\psi$. Characterize the graphs in  $ \mathcal{G}_{n,\psi}$ that attain the minimum spectral radius.
\end{problem}

Huang, Li and Zhan \cite {HLi} solved this problem for the cases $\psi\in \{ 2,\left\lceil 2n/3\right\rceil,n-2,n-1\}$; Huang, Liu and Zhang \cite{HLZ} independently settled the same cases by using a different approach; Zhao, Liu and Xiong \cite{ZLX} solved the problem for the case $\psi=\left\lceil 2n/3\right\rceil-1$. In this paper, we solve Problem \ref{pro1} for the case $\psi=n-3$.

\par Denoted by $G(a, b, c; p, q, r)$ the graph obtained from the path $P_7=v_1v_2v_3v_4v_5v_6v_7$ by attaching $a$ leaves and $p$ edges, $b$ leaves and $q$ edges, $c$ leaves and $r$ edges to the vertices $v_1,v_4,v_7$, respectively; see Figure \ref{fig1}. Our main result  states as follows.
\begin{theo}{\label{theo1}}
Let $n\ge 39$ be an integer and $m=\lfloor {n}/{6}\rfloor$. If $G\in\mathcal{G}_{n,n-3}$ attains the minimum spectral radius, then
\[
G\cong
\begin{cases}
G_{m,0}\equiv G(1,0,0;m-1,m-2,m-1), &\text{if } n\equiv 0 ~(\it{mod}\ 6);\\
G_{m,1}\equiv G(1,0,1;m-1,m-2,m-1), &\text{if } n\equiv 1 ~(\it{mod}\ 6);\\
G_{m,2}\equiv G(1,0,0;m-1,m-2,m), &\text{if } n\equiv 2 ~(\it{mod}\ 6);\\
G_{m,3}\equiv G(0,0,0;m,m-2,m), &\text{if } n\equiv 3 ~(\it{mod}\ 6);\\
G_{m,4}\equiv G(0,1,0;m,m-2,m), &\text{if } n\equiv 4 ~(\it{mod}\ 6);\\
G_{m,5}\equiv G(0,0,0;m,m-1,m), &\text{if } n\equiv 5 ~(\it{mod}\ 6).
\end{cases}
\]
\end{theo}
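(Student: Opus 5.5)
The plan has three stages: reduce to trees with a rigid skeleton, optimise over the few parameters of that skeleton, and compare spectral radii within the family.

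\textbf{Step 1: reduction to trees.} Let $G\in\mathcal{G}_{n,n-3}$ attain the minimum spectral radius, and fix a maximum dissociation set $S$ with complement $T=\{u_1,u_2,u_3\}$. Deleting an edge of a spanning tree strictly decreases $\rho$ (Perron--Frobenius), and it cannot decrease $\operatorname{diss}$. So I first show that some spanning tree of $G$ still has dissociation number exactly $n-3$; then minimality forces $G$ to be a tree. For the upper bound $\operatorname{diss}\le n-3$ in the tree, I would reuse the mechanism of the independence-number case (Xu--Hong--Shu--Zhai, Lou--Guo). Each $u_i$ must be ``forced'' out of every dissociation set, which requires $u_i$ to carry at least two pendant $P_1$ or $P_2$ branches. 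A spanning tree can be chosen so that these gadgets are preserved.

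\textbf{Step 2: structure of the extremal tree.} With $G$ a tree and $|T|=3$, every component of $G-T$ is $K_1$ or $K_2$, so $G$ is obtained by hanging leaves and pendant edges ($P_2$'s) on $u_1,u_2,u_3$, together with connecting pieces.

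The first task is to show that the three vertices of $T$ lie on a path $v_1\cdots v_7$ with $v_1,v_4,v_7\in T$. I would do this with standard moves that decrease $\rho$ while keeping the tree in the class:
\begin{itemize}
\item subdividing internal paths, using the Hoffman--Smith lemma;
\item the Li--Feng grafting lemma for moving pendant paths;
\item Perron vector comparisons.
\end{itemize}
In particular, the connectors between consecutive $T$-vertices should be $K_2$'s rather than $K_1$'s, since longer paths lower $\rho$. Next, since a pendant $P_2$ lowers $\rho$ relative to two leaves, I would show that at most one leaf hangs at each $v_i$, and in fact few leaves overall. Together these reduce $G$ to the form $G(a,b,c;p,q,r)$ with $a,b,c\in\{0,1\}$ and $a+b+c+2(p+q+r)+7=n$. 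Symmetry lets me assume $a\ge c$ and so on.

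\textbf{Step 3: optimising $p,q,r$ (main obstacle).} This is the step I expect to be hardest. For each vertex $v_1,v_4,v_7$, writing the eigenvalue equations along the pendant $P_2$'s gives an expression for the Perron vector entries. From these I would derive a characteristic equation of the form
\[
f_{a,b,c,p,q,r}(\lambda)=0,
\]
built from the rational functions $\lambda-\frac{a}{\lambda}-\frac{p\lambda}{\lambda^2-1}$ and their analogues at $v_4$ and $v_7$, coupled through the connecting $P_2$'s. The spectral radius is the largest root, which is near $\sqrt{2m}$-ish.

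The key comparison should show that the ``load'' at $v_1,v_7$ (end vertices, one neighbour on the spine) should exceed the load at $v_4$ (two spine neighbours) by about two pendant edges. This explains the pattern $q\approx p-1$ to $p-2$. I would compare neighbouring parameter choices, such as $(p,q,r)$ versus $(p+1,q-1,r)$, by evaluating the difference of their characteristic functions at the larger root and showing it has a definite sign. Because the quantities are rational in $\lambda$ with $\lambda^2\approx$ a quantity of order $n/3$, the sign analysis needs $n$ large, hence the hypothesis $n\ge 39$.

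The case split $n \bmod 6$ then decides where the leftover leaves and edges go. For each residue I would compare the finitely many balanced candidates directly, via difference equations in the Perron entries, to confirm that $G_{m,j}$ has the strictly smallest $\rho$.
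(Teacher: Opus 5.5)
Your Step~3 matches the paper. You eliminate the pendant $P_2$'s and connecting $K_2$'s to get $\rho(\rho^2-1)=\lambda_1(B_1(\rho))$, balance the diagonal, then compare finitely many candidates per residue class. (Incidentally $\rho^2\approx m+2\approx n/6$, not $2m$.) The genuine gaps are in Steps~1 and~2.

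\textbf{Step 1.} This step fails as designed. Your assertion that every vertex outside a maximum dissociation set carries two pendant $P_1$/$P_2$ branches is false for non-trees. So is the conclusion that some spanning tree keeps $\operatorname{diss}=n-3$. Take a vertex $u$ with $k\ge 4$ pendant leaves, joined by one edge to a vertex of a $C_5$. Then $\operatorname{diss}=k+3=n-3$, and the two excluded cycle vertices carry no pendant branches. Every spanning tree deletes an edge of the $C_5$, leaving $u$'s star joined to a $P_5$, whose dissociation number is $k+4=n-2$. The analogue of the Hall-matching argument from the independence-number case would need three vertex-disjoint $P_3$'s, and this graph has only two. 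So treeness cannot come from spanning trees without some substantive use of minimality. The paper does not prove this step; it invokes Lemma~\ref{lem1} (from \cite{HLi,HLZ}), which applies since $n-3>\lceil 2n/3\rceil$, and you should do the same.

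\textbf{Step 2.} You never address the configuration in which the three vertices of $T$ do not lie on a common path. Instead they branch at a vertex of a single component of $G-T$ that meets all three: the skeletons $S_1,S_2,S_3$, i.e.\ the family $H(a,b,c;p,q,r)$ built on $W_5$. None of your moves can remove this configuration. Subdividing edges, deleting leaves outside $T$, and grafting pendant paths all preserve the fact that $T$ branches at a common point, since subdivision does not change the homeomorphism type. So your ``first task'' cannot be accomplished with the tools you list.

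The paper handles this with a separate spectral comparison. It derives the reduced matrix $B_2(t)$ for $H(a,b,c;p,q,r)$ and observes $B_2(t)\ge B_1(t;a,b,c;p,q-1,r)$ entrywise. Via Lemma~\ref{3} this gives $\rho(H(a,b,c;p,q,r))>\rho(G(a,b,c;p,q-1,r))$, where the latter has the same order and lies in the class.

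You also need to treat separately the case where some vertex of $T$ has degree at most $2$, because there your subdivided edges need not lie on internal paths. The paper handles this case via a large star subgraph and Corollary~\ref{15}.
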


We will prepare some lemmas in Section 2 and present the proof of Theorem \ref{theo1} in Section 3.
 Our strategy combines structure analysis with traditional techniques in spectral graph theory. Unlike prior approaches that apply structural analysis directly to the original graphs, we focus on some hypergraphs derived from dissociation sets. A further key distinction   is our adoption of a method for constructing difference equations related to the Perron vector of a graph, an approach first proposed in
\cite{SK}.
\begin{figure}[H]
			\centering
			\begin{tikzpicture}
				\draw(0,0)[fill]circle[radius=1.0mm]--(-0.5,1)[fill]circle[radius=1.0mm];
				\draw(1,0)[fill]circle[radius=1.0mm];
				\draw(2,0)[fill]circle[radius=1.0mm];
				\draw(3,0)[fill]circle[radius=1.0mm]--(2.5,1)[fill]circle[radius=1.0mm];
				\draw(3,0)[fill]circle[radius=1.0mm]--(3.5,1)[fill]circle[radius=1.0mm];
				\draw(4,0)[fill]circle[radius=1.0mm];
				\draw(5,0)[fill]circle[radius=1.0mm];
				\draw(6,0)[fill]circle[radius=1.0mm]--(6.5,1)[fill]circle[radius=1.0mm];
				\draw(6,0)[fill]circle[radius=1.0mm]--(5.5,1)[fill]circle[radius=1.0mm];
				\draw(0,0)--(1,0);
				\draw (1,0)--(2,0);
				\draw (2,0)--(3,0);
				\draw (3,0)--(4,0);
				\draw (4,0)--(5,0);
				\draw (5,0)--(6,0);
				\draw(-0.4,-0.8)[fill]circle[radius=1.0mm];\draw(0.4,-0.8)[fill]circle[radius=1.0mm];\draw(-0.4,-1.6)[fill]circle[radius=1.0mm];\draw(0.4,-1.6)[fill]circle[radius=1.0mm];
				\draw(2.6,-0.8)[fill]circle[radius=1.0mm];\draw(3.4,-0.8)[fill]circle[radius=1.0mm];\draw(2.6,-1.6)[fill]circle[radius=1.0mm];\draw(3.4,-1.6)[fill]circle[radius=1.0mm];
				\draw(5.6,-0.8)[fill]circle[radius=1.0mm];\draw(6.4,-0.8)[fill]circle[radius=1.0mm];\draw(5.6,-1.6)[fill]circle[radius=1.0mm];\draw(6.4,-1.6)[fill]circle[radius=1.0mm];
				\draw(0,0)--(-0.4,-0.8);\draw (0,0)--(0.4,-0.8);\draw (3.0,0)--(2.6,-0.8);\draw (3,0)--(3.4,-0.8);\draw (6.0,0)--(5.6,-0.8);\draw (6,0)--(6.4,-0.8);
				\draw(-0.4,-0.8)--(-0.4,-1.6);\draw (0.4,-0.8)--(0.4,-1.6);\draw (2.6,-0.8)--(2.6,-1.6);\draw (3.4,-0.8)--(3.4,-1.6);\draw (5.6,-0.8)--(5.6,-1.6);\draw (6.4,-0.8)--(6.4,-1.6);
				\node at(0,-1.6) {$\dots$};\node at(3,-1.6) {$\dots$};\node at(6,-1.6) {$\dots$};\node[below] at(0,-1.6) {$p$};\node[below] at(3,-1.6) {$q$};\node[below] at(6,-1.6) {$r$};
				\draw(0.5,1)[fill]circle[radius=1.0mm];
				\draw(0,0)--(0.5,1);
				\node at(0,1)[above=2pt] {$a$};
				\node at(0,1){$\cdots$};
				\node at(3,1)[above=2pt] {$b$};
				\node at(3,1){$\cdots$};
				\node at(0,0)[left=2pt] {$v_1$};
				\node at(1,0)[below=2pt] {$v_2$};
				\node at(2,0)[below=2pt] {$v_3$};
				\node at(3,0)[below=4pt] {$v_4$};
				\node at(4,0)[below=2pt] {$v_5$};
				\node at(5,0)[below=2pt] {$v_6$};
				\node at(6,0)[right=2pt] {$v_7$};
				\node at(6,1)[above=2pt] {$c$};
				\node at(6,1){$\cdots$};
			\end{tikzpicture}
	\caption {$G(a,b,c;p,q,r)$}
	\label{fig1}
\end{figure}
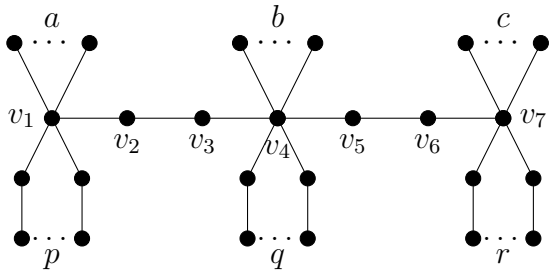
\section{Priliminaries}
In this section, we present some notations and preliminary lemmas.
\begin{lem}\cite{Zhan}\label{PF} If $A$ is an irreducible nonnegative matrix of order $n$ with $n\geq 2$, then the follow statements hold.\\
   \indent (1) $\rho(A)>0$, and $\rho(A)$ is a simple eigenvalue of $A$.\\
   \indent (2) $A$ has a positive eigenvector corresponding to $\rho(A)$.\\
   \indent (3) All nonnegative eigenvectors of $A$ correspond to the eigenvalue $\rho(A)$.\\
\end{lem}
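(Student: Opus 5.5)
Since Lemma \ref{PF} is the classical Perron--Frobenius theorem, I would prove it by the Collatz--Wielandt variational method. The one combinatorial input is that irreducibility of $A$ gives $(I+A)^{n-1}>0$ entrywise. Indeed, the $(i,j)$ entry of $(I+A)^{n-1}=\sum_k\binom{n-1}{k}A^k$ is positive as soon as the digraph of $A$ has a walk of length at most $n-1$ from $i$ to $j$, and strong connectivity guarantees one.

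First I construct the Perron pair. For nonnegative nonzero $x$, set $r(x)=\min_{i:\,x_i>0}(Ax)_i/x_i$, so that $r(x)$ is the largest $t$ with $Ax\ge tx$. Since $r$ is scale invariant, I would maximize it over the compact set $\{(I+A)^{n-1}x : x\ge 0,\ \sum x_i=1\}$. This set consists of positive vectors, and $r$ is continuous on positive vectors, so the maximum is attained. Moreover, this maximum equals the supremum $r^*$ of $r$ over the whole simplex, because $r((I+A)^{n-1}x)\ge r(x)$. Let $z$ be a maximizer and suppose $Az-r^*z\ge 0$ but $Az\neq r^*z$. Applying $(I+A)^{n-1}$ gives $Aw-r^*w>0$ for $w=(I+A)^{n-1}z$, hence $r(w)>r^*$, a contradiction. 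So $Az=r^*z$, and $z=(1+r^*)^{-(n-1)}(I+A)^{n-1}z>0$.

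Next I show $r^*=\rho(A)$. If $Ax=\lambda x$ with $x\ne0$, then $|\lambda||x|\le A|x|$ entrywise, so $|\lambda|\le r(|x|)\le r^*$. Also $r^*>0$: irreducibility with $n\ge2$ forces every row of $A$ to be nonzero, so $Az>0$, which gives (1) apart from simplicity. This already gives (2).

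For (3), apply the same construction to $A^T$, which is also irreducible, to obtain $y>0$ with $y^TA=\rho y^T$. If $Ax=\lambda x$ with $x\ge0$ and $x\neq 0$, then $\rho\,y^Tx=y^TAx=\lambda\,y^Tx$. Since $y^Tx>0$, this forces $\lambda=\rho$.

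Simplicity splits into two parts. For geometric simplicity, suppose a real eigenvector $w$ for $\rho$ were independent of $z$. Choose $t$ so that $u=z-tw$ is nonnegative, nonzero, and has a zero entry. Then $u=(1+\rho)^{-(n-1)}(I+A)^{n-1}u>0$, a contradiction. For algebraic simplicity I would use $p_A'(\lambda)=\sum_i p_{A(i)}(\lambda)$, where $A(i)$ is $A$ with row and column $i$ deleted. It then suffices to show $\rho(A(i))<\rho(A)$ for every $i$, because each $p_{A(i)}$ is monic and therefore positive at $\lambda=\rho(A)$, which gives $p_A'(\rho)>0$.

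I expect this strict inequality $\rho(A(i))<\rho(A)$ to be the main obstacle. To prove it, take a nonnegative eigenvector $v$ of $A(i)$ for $\rho(A(i))$; such a vector exists by a limiting argument applied to the irreducible matrices $A(i)+\varepsilon J$. Pad $v$ with a zero in position $i$ to get $\tilde v$, so that $A\tilde v\ge\rho(A(i))\tilde v$. Pairing with the left Perron vector gives $\rho(A)\,y^T\tilde v\ge \rho(A(i))\,y^T\tilde v$. If equality held, then $A\tilde v=\rho(A)\tilde v$, so $\tilde v$ would be a nonnegative eigenvector for $\rho(A)$. By geometric simplicity it would be a multiple of $z>0$, which contradicts its zero entry.
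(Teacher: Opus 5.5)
The paper gives no proof of this lemma. It quotes the classical Perron--Frobenius theorem from Zhan's textbook and uses it as a black box: parts (2) and (3) are what let the proof of Lemma~\ref{14} conclude that a positive eigenvector of $B_1(t)$ exists and must belong to $\lambda_1(B_1(t))$.

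Your proposal is instead a correct, self-contained proof along the Collatz--Wielandt lines. Each step is sound:
\begin{itemize}
\item the compactness argument on $(I+A)^{n-1}$ applied to the simplex;
\item the perturbation step forcing a maximizer of $r$ to be an eigenvector;
\item the bound $|\lambda|\le r(|x|)$, together with the fact (worth stating explicitly) that $r^*$ is itself an eigenvalue, so $r^*=\rho(A)$;
\item the left Perron vector for (3);
\item the shift $z-tw$ for geometric simplicity, where $w$ may be taken real via real and imaginary parts;
\item the identity $p_A'=\sum_i p_{A(i)}$ for algebraic simplicity.
\end{itemize}

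The step you flagged as the main obstacle can be done more cheaply with tools you already have. This avoids both the $A(i)+\varepsilon J$ limit and the left vector. Take any eigenpair $A(i)x=\lambda x$ and pad $x$ with a zero to get $\tilde x$. Then $A|\tilde x|\ge|\lambda|\,|\tilde x|$, so $|\lambda|\le r(|\tilde x|)\le\rho(A)$. If equality held, $|\tilde x|$ would maximize $r$. Your perturbation step applies to any nonnegative maximizer, so it would give $A|\tilde x|=\rho(A)|\tilde x|$ and hence $|\tilde x|>0$, contradicting the zero entry.

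Your route buys self-containment. Also, the strict inequality $\rho(A(i))<\rho(A)$ is exactly the vertex-deletion case of Lemma~\ref{lemma3}. Essentially the same argument, with a padded $0\le B\le A$, $B\ne A$ in place of $A(i)$, proves that lemma in general. For a result this standard, the paper's bare citation is the reasonable choice.
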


\begin{lem}\cite{Zhan}\label{W_ieq} Let $A,B$ be two $n\times n$ symmetric nonnegative matrices. Then
    $$max\{\rho(A),\rho(B)\}\leq \rho(A+B)\leq \rho(A)+\rho(B).$$
\end{lem}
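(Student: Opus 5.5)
The plan is to reduce everything to the Rayleigh quotient characterization of the largest eigenvalue of a real symmetric matrix, $\lambda_1(M)=\max_{\|x\|=1}x^TMx$, together with the elementary fact that for a symmetric nonnegative matrix the spectral radius equals its largest eigenvalue. I would avoid the full Perron--Frobenius machinery of Lemma \ref{PF}, since $A$ and $B$ need not be irreducible, and use instead a direct entrywise-absolute-value argument.

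\emph{Step 1: for symmetric nonnegative $M$ we have $\rho(M)=\lambda_1(M)$, and the maximum of $x^TMx$ over unit vectors is attained at a nonnegative vector.} For any real $x$, write $|x|$ for the entrywise absolute value. Since $M\ge 0$, we have
\[
|x^TMx|=\Big|\sum_{i,j}m_{ij}x_ix_j\Big|\le \sum_{i,j}m_{ij}|x_i||x_j|=|x|^TM|x|,
\]
and $\||x|\|=\|x\|$. Consequently the smallest eigenvalue satisfies $\lambda_n(M)=\min_{\|x\|=1}x^TMx\ge -\max_{\|x\|=1}|x|^TM|x|=-\lambda_1(M)$. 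Every eigenvalue of $M$ therefore lies in $[-\lambda_1(M),\lambda_1(M)]$, which gives $\rho(M)=\lambda_1(M)$. The same inequality shows that if $x$ is a maximizing unit vector, then so is $|x|$.

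\emph{Step 2: the upper bound.} The matrix $A+B$ is again symmetric and nonnegative, so $\rho(A+B)=\lambda_1(A+B)$ by Step 1. For every unit vector $x$,
\[
x^T(A+B)x=x^TAx+x^TBx\le \lambda_1(A)+\lambda_1(B)=\rho(A)+\rho(B).
\]
Taking the maximum over unit vectors $x$ gives $\rho(A+B)\le\rho(A)+\rho(B)$.

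\emph{Step 3: the lower bound.} By Step 1 choose a unit vector $y\ge 0$ with $y^TAy=\lambda_1(A)=\rho(A)$. Because $B\ge 0$ and $y\ge 0$, we have $y^TBy\ge 0$. Hence
\[
\rho(A+B)\ge y^T(A+B)y\ge y^TAy=\rho(A).
\]
By symmetry between $A$ and $B$, also $\rho(A+B)\ge\rho(B)$, and the left inequality of the lemma follows.

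The only mildly delicate point is Step 1, namely justifying $\rho=\lambda_1$ and the existence of a nonnegative maximizer without assuming irreducibility. The absolute-value trick handles this directly, so no genuine obstacle is expected.
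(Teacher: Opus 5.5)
Your proof is correct and complete. The paper gives no proof of its own for this lemma and only cites it. In the standard textbook treatment it combines two facts.
\begin{itemize}
\item The lower bound is monotonicity of the Perron root: $0\le A\le A+B$ entrywise implies $\rho(A)\le\rho(A+B)$. This comes from Perron--Frobenius theory for possibly reducible nonnegative matrices.
\item The upper bound is Weyl's inequality $\lambda_1(A+B)\le\lambda_1(A)+\lambda_1(B)$ for symmetric matrices, together with $\rho(M)=\lambda_1(M)$ for symmetric nonnegative $M$.
\end{itemize}
You instead obtain all of these ingredients from the Rayleigh quotient and the estimate $|x^TMx|\le |x|^TM|x|$. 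This makes your argument self-contained. As you point out, it also needs no irreducibility assumption, which matters here because the paper applies the lemma with $A=D_1(t)$, a diagonal and hence reducible matrix.

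The trade-off is that your lower bound uses symmetry, whereas the monotonicity argument works for arbitrary nonnegative matrices. Symmetry is genuinely needed only for the upper bound. For example, take $A=\begin{bmatrix}0&1\\0&0\end{bmatrix}$ and $B=A^T$: then $\rho(A)=\rho(B)=0$ but $\rho(A+B)=1$.
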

\begin{lem}\cite{Cvetkovi1995}\label{lemma13}
Let $S_t$ be the star graph of order $t+1$. Then $\rho(S_t)=\sqrt{t}$.
\end{lem}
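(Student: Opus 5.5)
The statement just above is Lemma \ref{lemma13}, $\rho(S_t)=\sqrt{t}$, which is cited from \cite{Cvetkovi1995}. A direct verification goes through the Perron--Frobenius theorem (Lemma \ref{PF}). The plan is to write down an explicit positive eigenvector of $A_{S_t}$ and read off its eigenvalue.

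Let $c$ be the center of $S_t$ and $u_1,\dots,u_t$ its leaves, so the only edges are $cu_i$. Define $x$ by $x_c=\sqrt{t}$ and $x_{u_i}=1$ for all $i$. We check $A_{S_t}x=\sqrt{t}\,x$ coordinatewise:
\[
(A_{S_t}x)_c=\sum_{i=1}^t x_{u_i}=t=\sqrt{t}\cdot x_c,
\qquad
(A_{S_t}x)_{u_i}=x_c=\sqrt{t}=\sqrt{t}\cdot x_{u_i}.
\]
So $x$ is an eigenvector with eigenvalue $\sqrt{t}$.

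For $t\ge 1$ the star is connected, so $A_{S_t}$ is an irreducible nonnegative matrix of order $t+1\ge 2$. Since $x$ is positive, Lemma \ref{PF}(3) says its eigenvalue must equal $\rho(A_{S_t})$, hence $\rho(S_t)=\sqrt{t}$. The degenerate case $t=0$ (a single vertex, spectral radius $0=\sqrt{0}$) is trivial.

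As a cross-check, the rank of $A_{S_t}$ is $2$, so the only nonzero eigenvalues are $\pm\lambda$. Also $\operatorname{tr}(A_{S_t}^2)=2t$, which gives $2\lambda^2=2t$ and again $\lambda=\sqrt{t}$.

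The only point needing care is invoking Lemma \ref{PF}(3) correctly, which requires irreducibility (connectivity); no step is a real obstacle.
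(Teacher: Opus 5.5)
Your proof is correct and complete. The vector with $x_c=\sqrt{t}$ and $x_{u_i}=1$ is a positive eigenvector of $A_{S_t}$ for the eigenvalue $\sqrt{t}$. For $t\ge 1$ the matrix is irreducible of order at least $2$, so Lemma~\ref{PF}(3) forces $\sqrt{t}=\rho(S_t)$, and you handle the case $t=0$ separately.

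The paper gives no proof of this lemma; it only cites the spectra monograph of Cvetkovi\'c et al. The standard justification there is that $S_t=K_{1,t}$ has characteristic polynomial $\lambda^{t-1}(\lambda^2-t)$.

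Your two arguments relate to that fact as follows:
\begin{itemize}
\item Your rank/trace cross-check (rank $2$, trace $0$, $\operatorname{tr}(A^2)=2t$) is essentially that computation. It proves the claim on its own, without Perron--Frobenius.
\item Your main eigenvector argument is the same technique the paper uses later, in Lemma~\ref{14}: exhibit a positive eigenvector, then invoke Lemma~\ref{PF} to identify its eigenvalue as the spectral radius.
\end{itemize}
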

\begin{lem}(\cite{CR})\label{lemm13}
	Let $G_1,G_2$ be two graphs, If $P(G_2,\lambda)\geq P(G_1,\lambda)$
	for all $\lambda \geq \rho(G_1)$, then $\rho(G_2)\leq \rho(G_1)$, with equality if and only if $P(G_2,\rho(G_1))= P(G_1,\rho(G_1))=0$.
\end{lem}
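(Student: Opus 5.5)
The plan is to argue directly from the location of the real roots of the characteristic polynomials, where $P(G,\lambda)=\det(\lambda I-A_G)$. Since $A_G$ is real symmetric, all roots of $P(G,\lambda)$ are real. Since $A_G$ is nonnegative, its spectral radius is attained by its largest eigenvalue, so $\rho(G)=\lambda_1(A_G)$ is the largest root of $P(G,\lambda)$. This is the standing convention in the introduction; for connected graphs it is also Lemma \ref{PF}.

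\textbf{Step 1: sign of $P(G_1,\lambda)$.} Write $P(G_1,\lambda)=\prod_i(\lambda-\lambda_i)$, a monic polynomial whose real roots all satisfy $\lambda_i\le \rho(G_1)$. Then $P(G_1,\lambda)\ge 0$ for every $\lambda\ge\rho(G_1)$. Moreover $P(G_1,\lambda)>0$ for every $\lambda>\rho(G_1)$, because each factor $\lambda-\lambda_i$ is then strictly positive.

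\textbf{Step 2: the inequality.} Combining Step 1 with the hypothesis, for every $\lambda>\rho(G_1)$ we get
\[P(G_2,\lambda)\ge P(G_1,\lambda)>0.\]
So $P(G_2,\lambda)$ has no root in $(\rho(G_1),\infty)$. Since $\rho(G_2)$ is a root of $P(G_2,\lambda)$, it follows that $\rho(G_2)\le\rho(G_1)$.

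\textbf{Step 3: the equality case.} Suppose first that $\rho(G_2)=\rho(G_1)$. Then $\rho(G_1)$ is the largest root of both polynomials, so $P(G_2,\rho(G_1))=P(G_1,\rho(G_1))=0$. Conversely, suppose $P(G_2,\rho(G_1))=0$. Then $\rho(G_1)$ is an eigenvalue of $A_{G_2}$, so $\rho(G_1)\le\lambda_1(A_{G_2})=\rho(G_2)$. Together with Step 2 this gives $\rho(G_2)=\rho(G_1)$.

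There is no substantial obstacle here. The only point needing care is the strict positivity of $P(G_1,\lambda)$ strictly to the right of $\rho(G_1)$, which rules out a root of $P(G_2,\lambda)$ beyond $\rho(G_1)$. The argument does not even need the two graphs to have the same order, since only the sign of a monic real-rooted polynomial is used.
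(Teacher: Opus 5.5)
Your proof is correct. Positivity of the monic, real-rooted $P(G_1,\lambda)$ on $(\rho(G_1),\infty)$ forces $P(G_2,\lambda)>0$ there, and $P(G_2,\rho(G_1))=0$ gives $\rho(G_1)\le\rho(G_2)$ for the equality case. The paper does not prove this lemma itself but quotes it from \cite{CR}, and your argument is the standard proof of this fact; it is valid even when $G_1$ and $G_2$ have different orders.
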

\begin{lem}\cite{HLC}\label{lemma14}
	Let $A_1,A_2$ be  real matrices such that $\rho(A_i)$ equals   the maximum real eigenvalue of $A_i$, and $\rho(A_i)\leq \rho,~i=1,2$. If $P(A_2,\lambda)\geq P(A_1,\lambda)$
	for all $\lambda \in [\rho(A_1),\rho]$, then $\rho(A_2)\leq \rho(A_1)$, with equality if and only if $P(A_2,\rho(A_1))=P(A_1,\rho(A_1))=0$.
\end{lem}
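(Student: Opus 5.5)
The plan is to argue directly from the sign of the characteristic polynomial $P(A,\lambda)=\det(\lambda I-A)$ to the right of the largest real eigenvalue. The only tools needed are that $P(A_i,\lambda)$ is a monic real polynomial of degree $n$ and that, by hypothesis, $\rho(A_i)$ is the largest real root of $P(A_i,\lambda)$. (Since $\rho(A_i)$ is assumed to be the maximum real eigenvalue, a real eigenvalue exists.)

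\textbf{Step 1: a sign observation.} For each $i$, the polynomial $P(A_i,\lambda)$ has no real root in $(\rho(A_i),\infty)$. Being monic, it tends to $+\infty$ as $\lambda\to+\infty$. By the intermediate value theorem it therefore keeps a constant sign on $(\rho(A_i),\infty)$, so
\[
P(A_i,\lambda)>0 \quad\text{for all } \lambda>\rho(A_i).
\]

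\textbf{Step 2: the inequality.} Suppose for contradiction that $\rho(A_2)>\rho(A_1)$. Since $\rho(A_2)\le\rho$, the point $\lambda_0=\rho(A_2)$ lies in $(\rho(A_1),\rho]\subseteq[\rho(A_1),\rho]$, so the hypothesis applies at $\lambda_0$. It gives
\[
0=P(A_2,\lambda_0)\ge P(A_1,\lambda_0).
\]
On the other hand, $\lambda_0>\rho(A_1)$, so Step 1 gives $P(A_1,\lambda_0)>0$. This is a contradiction, hence $\rho(A_2)\le\rho(A_1)$.

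\textbf{Step 3: the equality case.} Note that $P(A_1,\rho(A_1))=0$ always holds, so the condition reduces to $P(A_2,\rho(A_1))=0$.
\begin{itemize}
\item If $\rho(A_2)=\rho(A_1)$, then $P(A_2,\rho(A_1))=P(A_2,\rho(A_2))=0$.
\item Conversely, if $P(A_2,\rho(A_1))=0$, then $\rho(A_1)$ is a real eigenvalue of $A_2$. By maximality of $\rho(A_2)$ among real eigenvalues, $\rho(A_1)\le\rho(A_2)$. Combined with Step 2, this gives equality.
\end{itemize}

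None of these steps is a serious obstacle. The only point needing care is Step 2: one must check that the contradiction point $\rho(A_2)$ actually lies in the interval $[\rho(A_1),\rho]$ where the comparison hypothesis is available. This is exactly what the assumption $\rho(A_i)\le\rho$ guarantees.
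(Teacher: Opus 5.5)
Your proof is correct and complete. The positivity of the monic polynomial $P(A_1,\lambda)$ to the right of its largest real root, evaluated at $\lambda_0=\rho(A_2)\in(\rho(A_1),\rho]$, gives the contradiction, and the equality case follows from the maximality of $\rho(A_2)$ among the real eigenvalues of $A_2$.

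The paper does not prove this lemma; it only quotes it from \cite{HLC}, so there is no proof to compare routes with. Your argument is the standard one, and it has the same structure as the paper's own proof of Lemma~\ref{3}: the difference is positive to the right of the largest root, and evaluating at the other root gives the contradiction.
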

\begin{lem}
	\label{lemma3}\cite{Hoffman_1975} If $G_2$ is a proper subgraph of a graph $G_1$, then $\rho(G_1)>\rho(G_2)$.		
\end{lem}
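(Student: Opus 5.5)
The plan is a Rayleigh-quotient comparison combined with the Perron–Frobenius facts in Lemma \ref{PF}, using the standing convention of the paper that graphs are connected, so $A_{G_1}$ is irreducible. First reduce to matrices of the same order. Since $G_2$ is a subgraph of $G_1$, we may identify $V(G_2)$ with a subset of $V(G_1)$. Let $\widetilde{A}_2$ be the $|V(G_1)|\times|V(G_1)|$ matrix obtained from $A_{G_2}$ by adding zero rows and columns for the vertices of $V(G_1)\setminus V(G_2)$. Then $\widetilde{A}_2$ has the same nonzero spectrum as $A_{G_2}$, so $\rho(\widetilde{A}_2)=\rho(G_2)$. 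Moreover $0\le \widetilde{A}_2\le A_{G_1}$ entrywise. Because $G_2$ is proper, either an edge or a vertex of $G_1$ is missing. If a vertex is missing, connectivity of $G_1$ means an edge at that vertex is also missing. Either way $A_{G_1}-\widetilde{A}_2$ is nonnegative and has a positive entry.

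Next take a unit eigenvector $y$ of $\widetilde{A}_2$ for $\rho(G_2)=\lambda_1(\widetilde{A}_2)$. Replacing $y$ by $|y|$ (entrywise absolute values) does not decrease $y^T\widetilde{A}_2y$, because $\widetilde{A}_2\ge 0$. By the Rayleigh principle $|y|$ is then still a maximizer, so we may assume $y\ge 0$. The Rayleigh principle for the symmetric matrix $A_{G_1}$ then gives
$$\rho(G_2)=y^T\widetilde{A}_2y\le y^TA_{G_1}y\le \lambda_1(A_{G_1})=\rho(G_1).$$

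It remains to rule out equality, which is the only step needing care. Suppose $\rho(G_2)=\rho(G_1)$. Then both inequalities above are equalities. Equality in the second one forces $y$ to be an eigenvector of $A_{G_1}$ for $\rho(G_1)$. By Lemma \ref{PF}(1),(2), $\rho(G_1)$ is simple with a positive eigenvector, so $y$, being a nonnegative unit vector in that one-dimensional eigenspace, is the Perron vector of $G_1$ and is strictly positive. But then
$$y^T(A_{G_1}-\widetilde{A}_2)y\ge 2\,y_uy_v>0,$$
where $uv$ is an edge of $G_1$ absent from $G_2$. This contradicts equality in the first inequality. Hence $\rho(G_1)>\rho(G_2)$.

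The main subtlety is exactly this positivity of $y$: it uses the connectivity (irreducibility) of $G_1$, and it is why the strict inequality needs the Perron–Frobenius input of Lemma \ref{PF} rather than just the monotonicity in Lemma \ref{W_ieq}.
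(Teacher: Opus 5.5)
Your proof is correct. The paper gives no proof of this lemma; it is quoted from the literature. Your argument is the standard one: a Rayleigh-quotient comparison using a nonnegative top eigenvector of the zero-padded matrix, with strictness coming from the simplicity and positivity of the Perron vector in Lemma \ref{PF}. You are also right to rely on the connectivity of $G_1$ (the paper's standing convention), since without it the strict inequality fails, e.g.\ for $G_1=K_3\cup K_2$ and $G_2=K_3$.
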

\begin{lem}
	\label{lemma2}\cite{0} Let $v$ be a vertex in a connected graph  $G$, and let $k, m$ be nonnegative integers such that $k\ge m$. Denote by $G^{k,m}$ the graph obtained from $G$ by attaching   two new paths   $vv_1v_2\cdots v_k$ and  $vu_1u_2\cdots u_m$  to $v$, where $v_1,\ldots, v_k,u_1,\ldots,u_m$ are distinct and $\{v_1,\ldots,v_k,u_1,\ldots,u_m\}\cap V(G)=\emptyset$.  Then  $\rho(G^{k,m})>\rho(G^{k+1,m-1})$.
\end{lem}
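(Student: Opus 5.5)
The plan is to compare characteristic polynomials and then apply Lemma \ref{lemm13} with $G_1=G^{k,m}$ and $G_2=G^{k+1,m-1}$. We tacitly assume $m\ge 1$, so that $G^{k+1,m-1}$ is defined, and that $v$ has a neighbour in $G$. If $G$ is the single vertex $v$, both graphs are the path $P_{k+m+1}$ and the inequality is false; so the statement needs $G$ nontrivial, which I would add as a hypothesis. Write $\phi(H)=P(H,\lambda)$, and use the convention $\phi(P_0)=1$, $\phi(P_{-1})=0$.

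\textbf{Step 1: a gluing identity.} Both graphs arise by identifying $v\in V(G)$ with a vertex of a path $Q=P_{k+m+1}$. For $G^{k,m}$ this vertex splits $Q$ into pieces $P_k,P_m$; for $G^{k+1,m-1}$ it splits $Q$ into $P_{k+1},P_{m-1}$. For a coalescence $G\circ_v Q$ one has the standard formula
\[
\phi(G\circ_v Q)=\phi(G)\phi(Q-v)+\phi(G-v)\phi(Q)-\lambda\,\phi(G-v)\phi(Q-v).
\]
I would prove it by expanding $\det(\lambda I-A)$ along the cut vertex, or via Schwenk's vertex formula. Since $\phi(Q)$ is the same for both graphs, subtracting gives
\[
\phi(G^{k,m})-\phi(G^{k+1,m-1})=\bigl(\phi(G)-\lambda\phi(G-v)\bigr)\bigl(\phi(P_k)\phi(P_m)-\phi(P_{k+1})\phi(P_{m-1})\bigr).
\]

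\textbf{Step 2: the path factor.} Substitute $\lambda=t+t^{-1}$ with $t>1$, so that $\phi(P_n)=(t^{n+1}-t^{-n-1})/(t-t^{-1})$. A direct computation then gives the classical identity
\[
\phi(P_k)\phi(P_m)-\phi(P_{k+1})\phi(P_{m-1})=\phi(P_{k-m}).
\]
One can also get it by induction on $m$ from $\phi(P_{n+1})=\lambda\phi(P_n)-\phi(P_{n-1})$. Since $k\ge m$, the right-hand side is positive for all $\lambda>2$.

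\textbf{Step 3: sign of the $G$-factor (the main obstacle).} For $\lambda>\rho(G)$, Cramer's rule gives $\phi(G-v)/\phi(G)=[(\lambda I-A_G)^{-1}]_{vv}$. Expanding in the Neumann series $\sum_{j\ge 0}A_G^j/\lambda^{j+1}$, every term is nonnegative and the $j=2$ term equals $d(v)/\lambda^3>0$. Hence this entry is strictly larger than $1/\lambda$. Because $\phi(G)>0$ and $\phi(G-v)>0$ there (by Lemma \ref{lemma3}, $\rho(G-v)<\rho(G)$), we get $\phi(G)-\lambda\phi(G-v)<0$ for every $\lambda>\rho(G)$. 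This is the delicate point, since a direct sign argument through Schwenk's formula fails when $v$ lies on cycles; the resolvent argument avoids that.

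\textbf{Conclusion.} Let $\lambda\ge\rho(G^{k,m})$. Then $\lambda>\rho(G)$ by Lemma \ref{lemma3}. Also $\lambda>2$, because $G^{k,m}$ properly contains $K_{1,3}$ or a cycle, or else is a path with more than $k+m+1$ vertices; in each case Lemma \ref{lemma3} gives spectral radius above $2$ ($\rho(K_{1,3})=\sqrt3$ by Lemma \ref{lemma13}). By Steps 1--3, $\phi(G^{k+1,m-1})-\phi(G^{k,m})>0$ on $[\rho(G^{k,m}),\infty)$. Lemma \ref{lemm13} then gives $\rho(G^{k+1,m-1})\le\rho(G^{k,m})$. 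Equality is excluded: it would force $\phi(G^{k+1,m-1})=0$ at $\rho(G^{k,m})$, contradicting the strict inequality. Hence $\rho(G^{k,m})>\rho(G^{k+1,m-1})$.
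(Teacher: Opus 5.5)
Your Steps 1--3 are correct, and you are right that the statement needs $m\ge 1$ and $|V(G)|\ge 2$ (for $G=K_1$ both graphs are $P_{k+m+1}$). The paper gives no proof of this lemma and only cites \cite{0}; your characteristic-polynomial argument is the classical route.

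There is, however, one false step in your conclusion: the claim that every $\lambda\ge\rho(G^{k,m})$ satisfies $\lambda>2$. Properly containing $K_{1,3}$ only yields $\rho(G^{k,m})>\sqrt3$, and a path has spectral radius below $2$, so the ``in each case'' deduction fails. The claim itself is false for genuine instances of the lemma. With $G=K_2$ and $k=m=1$ you get $G^{1,1}=K_{1,3}$, with $\rho=\sqrt3$ by Lemma \ref{lemma13}. With $G=K_2$ and $m=1$ you get $W_{k+3}$, and with $G=K_2$, $k=m=2$ you get $E_6$; all of these have spectral radius below $2$ by Lemma \ref{lemma6}. You invoke $\lambda>2$ precisely to get $\phi(P_{k-m})(\lambda)>0$. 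So in such cases the interval $[\rho(G^{k,m}),2]$ is not covered, whereas Lemma \ref{lemm13} needs the inequality on all of $[\rho(G^{k,m}),\infty)$.

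The repair is short:
\begin{itemize}
\item If $k=m$, then $\phi(P_0)=1>0$.
\item If $k>m$, the path $v_1v_2\cdots v_{k-m}$ is a proper subgraph of the connected graph $G^{k,m}$. By Lemma \ref{lemma3}, $\rho(P_{k-m})<\rho(G^{k,m})\le\lambda$. The monic real-rooted polynomial $\phi(P_{k-m})$ is positive to the right of its largest root, so $\phi(P_{k-m})(\lambda)>0$.
\end{itemize}
Your Step 2 identity is a polynomial identity, so proving it for $\lambda>2$ via $\lambda=t+t^{-1}$ already gives it for all $\lambda$. With this change the proof is complete.

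A minor side remark: your comment that a Schwenk-based sign argument fails when $v$ lies on cycles is inaccurate. Schwenk's formula gives $\phi(G)-\lambda\phi(G-v)=-\sum_{u\sim v}\phi(G-u-v)-2\sum_{C\ni v}\phi(G-V(C))$, and every term on the right is positive for $\lambda>\rho(G)$, cycles included. Your resolvent argument is equally valid, just not needed for that reason.
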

\par An {\it internal path} of a graph $G$ is a sequence of vertices $v_1, v_2,\ldots, v_l$ with $l\ge2$ such that:\\
\indent (1) the vertices in the sequence are distinct(except possibly $v_1=v_l$);\\
\indent (2) $v_i$ is adjacent to $v_{i+1}(i=1,2,\ldots,l-1)$;\\
\indent (3) the vertex degrees satisfy $d(v_1)\ge3$, $d(v_2)=\cdots=d(v_{l-1})=2$ (unless $l=2$) and \indent $d(v_l)\ge3$.

Let $uv$ be an edge of a graph $G$. The {\it subdivision} of the edge $uv$ is replacing $uv$ with a 2-path, i.e., deleting $uv$ and adding a new vertex $w$ and two new edges $uw$, $wv$.  Denoted by  $G^{uv}$ the graph obtained from $G$ by doing a subdivision of $uv$.
\begin{lem}\label{lemma11}
	\cite{0} Suppose that $G\ncong \tilde{W}_{n}$ and $uv$ is an edge on an internal path of $G$.  Then $\rho(G^{uv})<\rho(G)$.
\end{lem}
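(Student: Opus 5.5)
The plan is to reduce to the case $\rho(G)>2$ and then compare Perron vectors along the internal path, using the Perron--Frobenius theorem (Lemma \ref{PF}) in its subinvariance form: if $B$ is irreducible and nonnegative, $y>0$, and $By\le \rho y$ entrywise with at least one strict inequality, then $\lambda_1(B)<\rho$. Throughout, I read $\tilde W_n$ as the tree obtained from a path by attaching two pendant vertices at each end; this is the Smith graph $\tilde D_n$, whose spectral radius is $2$.

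\textbf{Step 1: reduce to $\rho(G)>2$.} By the Smith classification, every connected graph with $\rho\le 2$ is one of the following: a path, a Dynkin tree $D_n,E_6,E_7,E_8$, a cycle, $\tilde E_6,\tilde E_7,\tilde E_8$, or $\tilde W_n$. A graph with an internal path must have two vertices of degree at least $3$, or a cycle through a vertex of degree at least $3$. Among the graphs listed, only $\tilde W_n$ qualifies. Since $\tilde W_n$ is excluded, we get $\rho(G)>2$, and we write $\rho=\rho(G)=2\cosh\theta$ with $\theta>0$.

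\textbf{Step 2: set up the test vector.} Let $x$ be the Perron vector of $G$, let $P=v_1v_2\cdots v_l$ be the internal path containing $uv$, and put $a=x_{v_1}$ and $b=x_{v_l}$. Each interior vertex $v_j$ has degree $2$, so the entries of $x$ on $P$ satisfy $x_{v_{j-1}}+x_{v_{j+1}}=\rho x_{v_j}$. With $L=l-1$, this gives
\[
x_{v_{j+1}}=f_L(j):=\frac{a\sinh((L-j)\theta)+b\sinh(j\theta)}{\sinh(L\theta)},\qquad 0\le j\le L.
\]
In $G^{uv}$, the path becomes $w_0w_1\cdots w_{L+1}$ with $w_0=v_1$ and $w_{L+1}=v_l$. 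Define $y$ to agree with $x$ off the interior of this path, and set $y_{w_j}=f_{L+1}(j)$. These entries are positive and satisfy the recurrence exactly at every interior vertex. At every vertex off the path, $(A_{G^{uv}}y)$ equals $\rho y$ there.

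\textbf{Step 3: check the endpoints (the main obstacle).} The only places where $A_{G^{uv}}y\le\rho y$ can fail are $v_1$ and $v_l$. At $v_1$ we need $f_{L+1}(1)\le f_L(1)$, and at $v_l$ we need $f_{L+1}(L)\le f_L(L-1)$, with at least one inequality strict. This is where I expect the real work to lie. The coefficient of $b$ in $f(1)$ decreases strictly as $L$ grows, but the coefficient of $a$, namely $\sinh((L-1)\theta)/\sinh(L\theta)$, increases. So the inequality does not follow termwise.

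My first attempt would be to add the two endpoint conditions after weighting them by $a$ and $b$. I would then use the symmetric identity
\[
a\,f(1)+b\,f(L-1)=\frac{(a^2+b^2)\sinh((L-1)\theta)+2ab\sinh\theta}{\sinh(L\theta)}
\]
and compare it with the analogous expression for $L+1$. This would give the Rayleigh-quotient inequality $y^TA_{G^{uv}}y<\rho\,y^Ty$ directly, instead of the entrywise bound, after accounting for the change in $\sum_j y_{w_j}^2$. If that bookkeeping is messy, the fallback is the classical Hoffman--Smith route. There one works with characteristic polynomials: using the edge recurrence $P(G)=\lambda P(G-v_j)-P(G-v_j-v_{j+1})-\cdots$ along $P$, one shows $P(G^{uv},\lambda)>P(G,\lambda)$ for $\lambda\ge\rho(G)>2$, and then concludes by Lemma \ref{lemm13}. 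This is the same difference-equation computation in the variable $t=e^\theta>1$.
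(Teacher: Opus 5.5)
The paper gives no proof of this lemma; it quotes it from the literature (it is the Hoffman--Smith subdivision theorem). Judged on its own, your Steps 1 and 2 are fine. Step 3, however, is the whole content of the lemma, and it is not carried out. Moreover, the route you set up cannot be completed. A direct computation gives
\[
f_{L+1}(1)-f_L(1)=\frac{\sinh\theta\,\bigl(a\sinh\theta-b\,(\sinh((L+1)\theta)-\sinh(L\theta))\bigr)}{\sinh(L\theta)\,\sinh((L+1)\theta)},
\]
so the inequality you need at $v_1$ is equivalent to $a\cosh(\theta/2)\le b\cosh((L+\tfrac12)\theta)$. This genuinely fails when the end values are unbalanced. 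Take the double star in which $v_1$ carries $k\ge 3$ leaves, $v_l$ carries two leaves, and $uv=v_1v_l$, so that $L=1$ and $\rho>2$. The eigen-equation at $v_l$ gives $a=(\rho-2/\rho)\,b$. Your vector has $y_w=(a+b)/\rho$, and
\[
(A_{G^{uv}}y)_{v_1}-\rho\,y_{v_1}=\frac{a+b}{\rho}-b=\Bigl(1-\frac{2}{\rho}\Bigr)\frac{b}{\rho}>0 .
\]
Since $L=1$, $y_w$ is the only free entry, and no choice of it works. The constraints at $v_1$, $v_l$ and $w$ force $(a+b)/\rho\le y_w\le\min\{a,b\}=b$, i.e.\ $a\le(\rho-1)b$, which is false here.

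Your first repair uses the variational principle in the wrong direction. Since $\lambda_1(A)=\max_{z\ne 0}z^TAz/z^Tz$, exhibiting one $y$ with $y^TA_{G^{uv}}y<\rho\,y^Ty$ says nothing about $\rho(G^{uv})$; a single Rayleigh quotient only certifies lower bounds on $\lambda_1$. (There is also no change in $\sum_j y_{w_j}^2$ to account for: $y^TA_{G^{uv}}y-\rho\,y^Ty$ is exactly $a(f_{L+1}(1)-f_L(1))+b(f_{L+1}(L)-f_L(L-1))$.)

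Used in the correct direction, you would start from the Perron vector $x'$ of $G^{uv}$, with end values $a',b'$ and $\rho'=2\cosh\theta'$, and fill the shorter path of $G$ with the $\rho'$-harmonic sequence. The same algebra shows $y^TA_Gy>\rho'y^Ty$ if and only if $\frac{a'^2+b'^2}{2a'b'}<\frac{\cosh((L+\frac12)\theta')}{\cosh(\theta'/2)}$. This fails for the subdivided double star above once $k\ge 4$: there $a'/b'=\rho'^2-3$, while the right side is $\rho'-1$. So no argument that keeps a Perron vector fixed off the path can work, and a global ingredient is required. Across a bridge you could rescale one side, but an internal path may lie on cycles.

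The characteristic-polynomial fallback does not supply that ingredient either. Asserting $P(G^{uv},\lambda)>P(G,\lambda)$ for $\lambda\ge\rho(G)$ is just the lemma restated. If you pursue it, let $H$ be $G$ minus the interior vertices of the path (minus the edge $v_1v_l$ when $L=1$). Then the cycle terms cancel and
\[
P(G^{uv})-P(G)=\kappa_L\,P(H)-\kappa_{L-1}\bigl(P(H-v_1)+P(H-v_l)\bigr)+\kappa_{L-2}\,P(H-v_1-v_l),\qquad \kappa_j=\frac{\cosh((j+\frac12)\theta)}{\cosh(\theta/2)}.
\]
Interlacing with $G^{uv}-w=G-uv$, where $w$ is the subdivision vertex, shows it suffices to prove this expression is positive at $\lambda=\rho(G)$. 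That positivity is exactly the step that still needs an actual proof.
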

\begin{lem}
	\label{lemma6}\cite{Smith1970} The only connected graphs on $n$ vertices with spectral radius less than 2 are the path $P_n$ and
	$W_n$, with additional cases $E_6, E_7, E_8$ when $n=6,7,8$, where $W_n, E_6, E_7, E_8$ have the following diagrams.
\end{lem}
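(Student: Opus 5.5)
We prove the lemma by comparing graphs with a small family of ``critical'' graphs, each having spectral radius exactly $2$. The two directions, forbidden subgraphs and containment in a critical graph, both follow from Lemma \ref{lemma3}.

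\textbf{Step 1: the critical graphs.} The family consists of the cycles $C_k$ ($k\ge 3$), the star $K_{1,4}$, the trees $\tilde D_k$ ($k\ge 5$) and the trees $\tilde E_6,\tilde E_7,\tilde E_8$. Here $\tilde D_k$ is a path whose two end vertices each carry two extra pendant leaves. The trees $\tilde E_6,\tilde E_7,\tilde E_8$ have one vertex of degree $3$ with arms of $(2,2,2)$, $(1,3,3)$ and $(1,2,5)$ edges. For each we exhibit a positive vector $x$ with $A x = 2x$; by Lemma \ref{PF}(3) this forces $\rho=2$. The vectors are:
\begin{itemize}
\item $C_k$ and $K_{1,4}$: all ones on $C_k$; for $K_{1,4}$, value $2$ at the center and $1$ at the leaves.
\item $\tilde D_k$: value $1$ at the four leaves and $2$ on the path.
\item $\tilde E_6$: value $3$ at the center, then $2,1$ along each arm.
\item $\tilde E_7$: value $4$ at the center; the long arms read $3,2,1$ and the short arm reads $2$.
\item $\tilde E_8$: value $6$ at the center; the arms read $3$; $4,2$; and $5,4,3,2,1$.
\end{itemize}
Checking these is routine.

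\textbf{Step 2: forbidden structure.} Let $G$ be connected with $\rho(G)<2$. By Lemma \ref{lemma3}, $G$ contains none of the graphs in Step 1 as a subgraph. Excluding all cycles shows $G$ is a tree, and excluding $K_{1,4}$ gives maximum degree at most $3$. If $G$ had two vertices of degree $3$, the path joining them together with two further neighbours at each end would contain some $\tilde D_k$. Hence $G$ is a path, or it has exactly one branch vertex with three arms of lengths $a\le b\le c$ edges. We then argue on the arm lengths:
\begin{itemize}
\item Excluding $\tilde E_6$ forces $a=1$.
\item Excluding $\tilde E_7$ then forces $b\le 2$.
\item If $b=1$, the tree is $W_n$.
\item If $b=2$, excluding $\tilde E_8$ forces $c\le 4$, giving exactly $E_6,E_7,E_8$ for $c=2,3,4$.
\end{itemize}

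\textbf{Step 3: the converse.} We show that every listed graph has $\rho<2$. Each is a proper subgraph of a critical graph: $P_n\subset C_{n+1}$, $W_n\subset\tilde D_{n+1}$ (remove one leaf), and $E_k\subset\tilde E_k$ (extend the longest arm by one vertex). Lemma \ref{lemma3} then gives $\rho<2$.

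The only delicate point is Step 1, namely getting the eigenvectors right, together with the care in Step 2 that the relevant critical graph genuinely embeds in each case. In particular, the $\tilde D_k$ argument requires two vertices of degree $3$, and when they are adjacent this gives $\tilde D_5$.
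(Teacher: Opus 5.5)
The paper gives no proof of this lemma; it quotes Smith (1970) as a black box. Your proposal supplies the classical self-contained proof, and it is correct.
\begin{itemize}
\item All six eigenvectors in Step~1 satisfy $Ax=2x$ at every vertex type, and Lemma~\ref{PF}(3) then gives $\rho=2$.
\item The reduction in Step~2 is exhaustive: no cycles, maximum degree at most $3$, at most one branch vertex, and then $a=1$, $b\le 2$, $c\le 4$.
\end{itemize}
Compared with citing Smith, your route makes the lemma rest only on Lemmas~\ref{PF} and~\ref{lemma3}, which the paper already states. It also proves directly the half of Lemma~\ref{lemma7} you need (that the critical graphs have spectral radius exactly $2$), instead of borrowing it from the same source.

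There are two slips in Step~3. Neither is a gap.
\begin{itemize}
\item \emph{The $W_n$ containment.} With your indexing, $\tilde D_5$ has six vertices, so $\tilde D_k$ has $k+1$. Deleting one leaf from $\tilde D_{n+1}$ therefore gives $W_{n+1}$, not $W_n$. It is deleting a leaf from $\tilde D_n$ (read as $K_{1,4}$ when $n=4$) that gives $W_n$. Your containment still holds properly, since $W_n\subset W_{n+1}\subset\tilde D_{n+1}$.
\item \emph{The $E_k$ containments.} Extending the longest arm of $E_6$ gives $E_7$, and extending the longest arm of $E_7$ gives $E_8$. To reach $\tilde E_6$ you extend the arm of length $1$; to reach $\tilde E_7$ you extend the arm of length $2$. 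The containments $E_k\subset\tilde E_k$ are nonetheless true. In any case $E_6\subset E_7\subset E_8\subset\tilde E_8$, so Lemma~\ref{lemma3} applies as you intend.
\end{itemize}
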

\begin{figure}[H]
	\begin{subfigure}{0.4\textwidth}
		\centering
		\begin{tikzpicture}
			\draw(0,0)[fill]circle[radius=1.0mm]--(1,0)[fill]circle[radius=1.0mm];
			\draw(1,0)[fill]circle[radius=1.0mm]--(2,0)[fill]circle[radius=1.0mm];
			\draw(1,0)[fill]circle[radius=1.0mm]--(1,1)[fill]circle[radius=1.0mm];
			\node at (2.5,0){$\cdots$};
			\draw(3,0)[fill]circle[radius=1.0mm];
			\draw(3,0)--(4,0)[fill]circle[radius=1.0mm];
		\end{tikzpicture}
		\caption*{$W_n$ }
		\label{fig:1}
	\end{subfigure}
	\begin{subfigure}{0.6\textwidth}
		\centering
		\begin{tikzpicture}
			\draw(0,0)[fill]circle[radius=1.0mm]--(0.8,0)[fill]circle[radius=1.0mm];
			\draw(0.8,0)[fill]circle[radius=1.0mm]--(1.6,0)[fill]circle[radius=1.0mm];	
			\draw(1.6,0)[fill]circle[radius=1.0mm]--(2.4,0)[fill]circle[radius=1.0mm];
			\draw(2.4,0)[fill]circle[radius=1.0mm]--(3.2,0)[fill]circle[radius=1.0mm];
			\draw(1.6,0.8)[fill]circle[radius=1.0mm]--(1.6,0)[fill]circle[radius=1.0mm];
		\end{tikzpicture}
		\caption*{$E_6$ }
		\label{fig:sub2.1}
	\end{subfigure}
	\begin{subfigure}{0.4\textwidth}
		\centering
		\begin{tikzpicture}
			\draw(0,0)[fill]circle[radius=1.0mm]--(0.8,0)[fill]circle[radius=1.0mm];
			\draw(0.8,0)[fill]circle[radius=1.0mm]--(1.6,0)[fill]circle[radius=1.0mm];	
			\draw(1.6,0)[fill]circle[radius=1.0mm]--(2.4,0)[fill]circle[radius=1.0mm];
			\draw(2.4,0)[fill]circle[radius=1.0mm]--(3.2,0)[fill]circle[radius=1.0mm];
			\draw(3.2,0)[fill]circle[radius=1.0mm]--(4,0)[fill]circle[radius=1.0mm];
			\draw(1.6,0.8)[fill]circle[radius=1.0mm]--(1.6,0)[fill]circle[radius=1.0mm];
		\end{tikzpicture}
		\caption*{$E_7$}
		\label{fig:sub2.2}
	\end{subfigure}
	\begin{subfigure}{0.6\textwidth}
		\centering
		\begin{tikzpicture}
			\draw(0,0)[fill]circle[radius=1.0mm]--(0.8,0)[fill]circle[radius=1.0mm];
			\draw(0.8,0)[fill]circle[radius=1.0mm]--(1.6,0)[fill]circle[radius=1.0mm];	
			\draw(1.6,0)[fill]circle[radius=1.0mm]--(2.4,0)[fill]circle[radius=1.0mm];
			\draw(2.4,0)[fill]circle[radius=1.0mm]--(3.2,0)[fill]circle[radius=1.0mm];
			\draw(3.2,0)[fill]circle[radius=1.0mm]--(4,0)[fill]circle[radius=1.0mm];
			\draw(4,0)[fill]circle[radius=1.0mm]--(4.8,0)[fill]circle[radius=1.0mm];
			\draw(1.6,0.8)[fill]circle[radius=1.0mm]--(1.6,0)[fill]circle[radius=1.0mm];
		\end{tikzpicture}
		\caption*{$E_8$ }
		\label{fig:sub2.3}
	\end{subfigure}
	\caption{The graphs $W_n$, $E_6,~E_7,~E_8$ }
	\label{fig:2}
\end{figure}
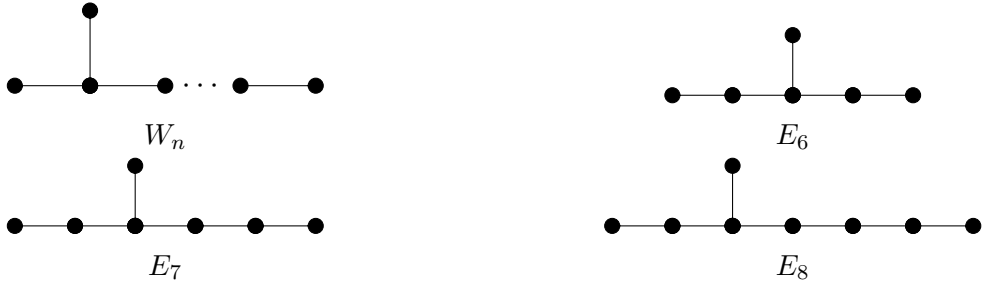

\begin{lem}
	\label{lemma7}\cite{Smith1970} The only connected graphs on $n$ vertices with spectral radius 2 are the cycle   $C_n$ and  $\tilde{W}_{n}$, with additional cases $\tilde{E}_6, \tilde{E}_7, \tilde{E}_8$ when $n=6,7,8$, where $\tilde{W}_{n}, \tilde{E}_6, \tilde{E}_7$ and $ \tilde{E}_8$ have the following diagrams.
\end{lem}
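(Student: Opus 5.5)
The plan is the classical argument: first exhibit an eigenvector for eigenvalue $2$ in each listed graph, then show that every other connected graph either properly contains one of the listed graphs or is one of the graphs of Lemma \ref{lemma6}. The main tool is the strict monotonicity of Lemma \ref{lemma3}.

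\emph{Step 1 (the listed graphs have spectral radius 2).} For each of $C_n$, $\tilde W_n$, $\tilde E_6$, $\tilde E_7$, $\tilde E_8$ I write down an explicit positive vector $x$ with $A x=2x$.
\begin{itemize}
\item For $C_n$, take the all-ones vector.
\item For $\tilde W_n$, put weight $1$ on the four pendant vertices and weight $2$ on the internal path vertices.
\item For $\tilde E_6=T(2,2,2)$, $\tilde E_7=T(1,3,3)$ and $\tilde E_8=T(1,2,5)$, use the usual weights that decrease linearly along each arm from the branch vertex. For example, $\tilde E_6$ has weights $3,2,1$ going out along each arm.
\end{itemize}
Here $T(a,b,c)$ denotes the tree with one vertex of degree $3$ and arms of lengths $a,b,c$. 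In each case a routine check gives $Ax=2x$. Since $x$ is positive, Lemma \ref{PF}(3) shows that $2$ is the spectral radius.

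\emph{Step 2 (any other connected graph has $\rho\neq 2$).} Let $G$ be connected with $\rho(G)=2$.
\begin{itemize}
\item If $G$ contains a cycle, it contains some $C_k$ as a subgraph. Then $G=C_k$, since otherwise Lemma \ref{lemma3} gives $\rho(G)>2$. So we may assume $G$ is a tree.
\item If $G$ has a vertex of degree at least $4$, it contains $K_{1,4}=\tilde W_5$, so $G=\tilde W_5$.
\item If $G$ has two vertices of degree $3$, take two such vertices that are closest to each other. The path joining them, together with two further neighbours of each endpoint, forms a copy of some $\tilde W_k$. Hence $G$ equals that $\tilde W_k$.
\item If $G$ has no vertex of degree $3$, then $G=P_n$, and $\rho(P_n)=2\cos(\pi/(n+1))<2$, a contradiction.
\item Otherwise $G=T(a,b,c)$ with $a\le b\le c$. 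If $a\ge 2$, then $G\supseteq \tilde E_6$. If $a=1$ and $b\ge 3$, then $G\supseteq\tilde E_7$. If $a=1$, $b=2$ and $c\ge 5$, then $G\supseteq\tilde E_8$. In each of these cases equality is forced by Lemma \ref{lemma3}.
\item The remaining trees are $T(1,1,c)=W_n$ and $T(1,2,c)$ with $c\le 4$, which are $E_6,E_7,E_8$ (or $T(1,2,2)\subset E_6$). Each of these is a proper subgraph of some $\tilde W$ or $\tilde E$ graph, so Lemma \ref{lemma3} gives $\rho<2$, a contradiction.
\end{itemize}

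The only real work lies in choosing the eigenvectors in Step 1 and in checking that the case split in Step 2 is exhaustive. I expect the most care to be needed in the two-branch-vertex case, where choosing a closest pair guarantees that the copy of $\tilde W_k$ is actually a subgraph of $G$.
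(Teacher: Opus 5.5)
The paper gives no proof of this lemma; it imports Smith's classification by citation. Your argument is correct, and it is the standard self-contained proof. You exhibit explicit positive eigenvectors for eigenvalue $2$ and apply Lemma~\ref{PF}(3). You then run a case split in which Lemma~\ref{lemma3} forces $G$ to equal one of the listed graphs. It uses only tools already stated in the paper.

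A side benefit of your route is that the case split is really a trichotomy. Every connected graph either is a listed graph, properly contains one (so $\rho>2$), or is one of $P_n, W_n, E_6, E_7, E_8$ (so $\rho<2$). So the same argument makes Lemma~\ref{lemma6} self-contained at no extra cost.

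Two minor points, neither affecting validity:
\begin{itemize}
\item The closest-pair choice in the two-branch-vertex case is unnecessary. In a tree, the two further neighbours of each of any two degree-$3$ vertices are automatically four distinct vertices off the joining path.
\item Your parenthetical ``$T(1,2,2)\subset E_6$'' should read $T(1,2,2)=E_6$, with $E_7=T(1,2,3)$ and $E_8=T(1,2,4)$.
\end{itemize}

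Your identifications $\tilde E_6=T(2,2,2)$, $\tilde E_7=T(1,3,3)$, $\tilde E_8=T(1,2,5)$ show these graphs have $7,8,9$ vertices. So the statement's ``when $n=6,7,8$'' should read $n=7,8,9$. Likewise, your convention $\tilde W_5=K_{1,4}$ is needed for the statement to be true, since the diagram of $\tilde W_n$ only depicts two distinct branch vertices. Your proof establishes this corrected version.
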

\begin{figure}[H]
	\centering
	\begin{subfigure}{0.45\textwidth}
		\vspace{15pt}
		\centering
		\begin{tikzpicture}
			\draw(0,0)[fill]circle[radius=1.0mm]--(1,0)[fill]circle[radius=1.0mm];
			\draw(1,0)[fill]circle[radius=1.0mm]--(2,0)[fill]circle[radius=1.0mm];
			\draw(3,0)[fill]circle[radius=1.0mm]--(4,0)[fill]circle[radius=1.0mm];
			\draw(4,0)[fill]circle[radius=1.0mm]--(5,0)[fill]circle[radius=1.0mm];
			\draw(1,1)[fill]circle[radius=1.0mm]--(1,0)[fill]circle[radius=1.0mm];
			\draw(4,1)[fill]circle[radius=1.0mm]--(4,0)[fill]circle[radius=1.0mm];
			\node at (2.5,0){$\cdots$};
		\end{tikzpicture}
		\caption*{$\tilde{W}_{n}$}
		\label{fig:3}
	\end{subfigure}
	\begin{subfigure}{0.45\textwidth}
		\centering
		\begin{tikzpicture}
			\foreach \y in {0}{
				\foreach \x in {0,0.8,1.6,2.4}{
					\draw (\x, \y)[fill]circle[radius=1.0mm]--(\x+0.8,\y);
					
				}
			};
			\draw(3.2,0)[fill]circle[radius=1.0mm];
			\draw(1.6,0.8)[fill]circle[radius=1.0mm]--(1.6,0);
			\draw(1.6,1.6)[fill]circle[radius=1.0mm]--(1.6,0.8);
		\end{tikzpicture}
		\caption*{$\tilde{E}_6$}
		\label{fig:4.1}
	\end{subfigure}
	\begin{subfigure}{0.45\textwidth}
		\centering
		\vspace{23pt}
		\begin{tikzpicture}
			\foreach \y in {0}{
				\foreach \x in {0,0.8,1.6,2.4,3.2,4}{
					\draw (\x, \y)[fill]circle[radius=1.0mm]--(\x+0.8,\y);
					
				}
			};
			\draw(4.8,0)[fill]circle[radius=1.0mm];
			\draw(2.4,0.8)[fill]circle[radius=1.0mm]--(2.4,0);
		\end{tikzpicture}
		\caption*{$\tilde{E}_7$}
		\label{fig:4.2}
	\end{subfigure}
	\begin{subfigure}{0.45\textwidth}
		\centering
		\vspace{23pt}
		\begin{tikzpicture}
			\foreach \y in {0}{
				\foreach \x in {0,0.8,1.6,2.4,3.2,4,4.8}{
					\draw (\x, \y)[fill]circle[radius=1.0mm]--(\x+0.8,\y);
					
				}
			};
			\draw(5.6,0)[fill]circle[radius=1.0mm];
			\draw(1.6,0.8)[fill]circle[radius=1.0mm]--(1.6,0);
		\end{tikzpicture}
		\caption*{$\tilde{E}_8$}
		\label{fig:4.3}
	\end{subfigure}
	\label{fig:4}
	\caption{The graphs $\tilde{W}_{n}$, $\tilde{E}_6,\tilde{E}_7,\tilde{E}_8$}
\end{figure}
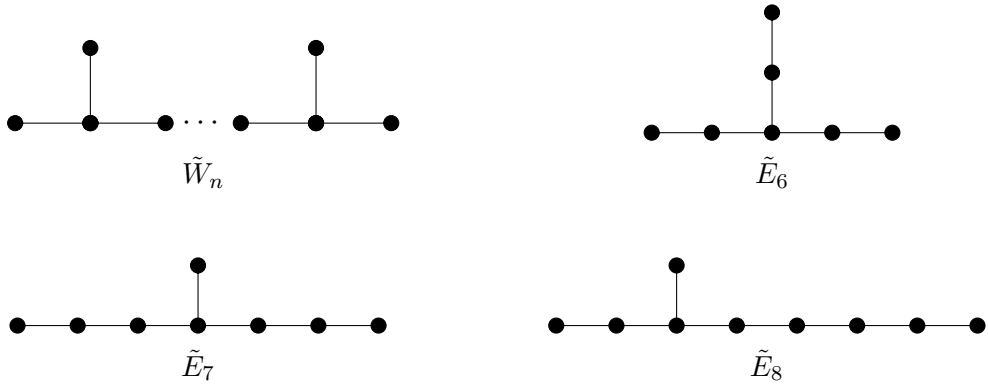
\begin{lem}\label{lem1}\cite{HLi,HLZ}
	Let $n$ and $\psi$ be positive integers with $ \psi>\lceil 2n/3\rceil $.	If $G$ attains the minimum spectral radius  in $\mathcal{G}_{n,\psi}$, then $G$ is a tree.
\end{lem}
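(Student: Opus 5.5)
The plan is to argue by contradiction. Suppose $G$ attains the minimum spectral radius in $\mathcal{G}_{n,\psi}$ but contains a cycle. Let $C$ be a cycle of $G$ and $e=xy$ an edge of $C$. Then $G-e$ is connected and is a proper spanning subgraph of $G$, so $\rho(G-e)<\rho(G)$ by Lemma \ref{lemma3}. Deleting an edge can only enlarge the family of dissociation sets, so $\operatorname{diss}(G-e)\ge\psi$. Moreover, a dissociation set of $G-e$ with $e$ deleted stays a dissociation set of $G$ after removing one endpoint of $e$. Hence $\operatorname{diss}(G-e)\in\{\psi,\psi+1\}$. If $\operatorname{diss}(G-e)=\psi$ for some cycle edge $e$, then $G-e\in\mathcal{G}_{n,\psi}$ has smaller spectral radius, a contradiction. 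So the first step reduces the lemma to the following claim: in a connected graph with $\operatorname{diss}(G)=\psi>\lceil 2n/3\rceil$, it is impossible that \emph{every} edge lying on a cycle is critical, i.e.\ satisfies $\operatorname{diss}(G-e)=\psi+1$.

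The second step is to analyse a critical edge $e=xy$. Criticality gives a dissociation set $D_e$ of $G-e$ with $|D_e|=\psi+1$ and $x,y\in D_e$. Since $D_e$ is not a dissociation set in $G$, at least one of $x,y$ already has a $D_e$-neighbour in $G-e$. Consequently $D_e\setminus\{x\}$ and $D_e\setminus\{y\}$ are maximum dissociation sets of $G$. Their complements $R=V(G)\setminus D$ have size $n-\psi<n/3$. I will fix one such maximum set $S$, put $R=V\setminus S$, and record the local structure forced around $C$. Each vertex $r\in R$ must be "blocked", meaning it has two neighbours in $S$, or one neighbour in $S$ that is already matched inside $G[S]$; otherwise $S\cup\{r\}$ would be a larger dissociation set. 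Also, swapping $x$ or $y$ in and out of $S$ along the cycle $C$ produces further maximum sets.

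The third step, which I expect to be the main obstacle, is the counting/exchange argument that turns "every cycle edge is critical" into a contradiction with $|R|<n/3$. The idea is to walk around $C$ and apply criticality to consecutive edges. Each critical edge forces a vertex of $R$ near it whose removal is "paid for" by a specific $S$-configuration, either a matched pair in $G[S]$ or two $S$-neighbours. I would try to show that these configurations attached to distinct edges of $C$ are essentially disjoint and together with $R$ cover about three vertices per vertex of $R$. A variant is to exhibit directly, via an exchange along $C$, a dissociation set of $G$ of size $\psi+1$, contradicting maximality.

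If this direct route becomes too messy, the fallback is to pick a spanning tree $T$ of $G$ that keeps, for every $r\in R$, the two or three edges certifying that $r$ is blocked, together with the matching edges of $G[S]$. Then $S$ still witnesses the lower bound in $T$. I would use $|R|<n/3$ to show that no dissociation set of $T$ beats $|S|$, since the certificate stars around $R$ are too numerous to be avoided. That gives $\operatorname{diss}(T)=\psi$ and $\rho(T)<\rho(G)$ by Lemma \ref{lemma3}. In either version, the hypothesis $\psi>\lceil 2n/3\rceil$ is used exactly in this counting step, which is where I expect the real work to lie.
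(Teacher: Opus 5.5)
\textbf{There is a genuine gap: the claim your Step~1 reduces to is false, and so is your fallback.} The first part of Step~1 is correct: $\operatorname{diss}(G-e)\in\{\psi,\psi+1\}$. The trouble is that after Step~1 you stop using the extremality of $G$, so the claim would have to hold for every connected graph with these parameters. It does not.

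Let $k\ge 4$ and let $G$ consist of three pieces: the cycle $x_1x_2x_3x_4x_1$, the spider with centre $z$ and legs $za_ib_i$ ($1\le i\le k$), and the bridge $x_1z$. Then $n=2k+5$.
\begin{itemize}
\item $\operatorname{diss}(C_4)=2$, and the spider has dissociation number $2k$, because $d(z)=k\ge 2$ rules out taking all of it.
\item Restricting a dissociation set of $G$ to the two pieces therefore gives $\operatorname{diss}(G)\le 2k+2$. The set $\{x_1,x_2\}\cup\{a_i,b_i:1\le i\le k\}$ attains this, so $\psi=2k+2$.
\item $\psi$ exceeds $\lceil 2n/3\rceil=\lceil (4k+10)/3\rceil$ for $k\ge 4$.
\item For every edge $e$ of the $4$-cycle, $C_4-e$ is a $P_4$ containing a dissociation set of three vertices. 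Since $z$ is not used, these three vertices together with all $a_i,b_i$ form a dissociation set of $G-e$. Hence $\operatorname{diss}(G-e)=\psi+1$ for every cycle edge, so every cycle edge is critical.
\item The spanning trees of $G$ are exactly these graphs $G-e$. So no spanning tree of $G$ has dissociation number $\psi$, which kills the fallback as well.
\end{itemize}
No counting or exchange argument around $R$ can prove a statement that fails for this $G$.

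\textbf{The missing idea is that you need not keep the dissociation number fixed when passing to a tree.}
\begin{itemize}
\item Take any spanning tree $T$ of $G$. Then $\operatorname{diss}(T)\ge\psi$, and if $G$ has a cycle, $\rho(T)<\rho(G)$ by Lemma~\ref{lemma3}.
\item If $T$ is not a path, it has a vertex of degree at least $3$ carrying two pendant paths. The transformation of Lemma~\ref{lemma2} (move the end vertex of the shorter path to the end of the longer one) strictly decreases $\rho$.
\item This move changes the dissociation number by at most one. Indeed, $G^{k,m}$ and $G^{k+1,m-1}$ both arise from $G^{k,m-1}$ by adding a single leaf, and adding a leaf raises $\operatorname{diss}$ by $0$ or $1$.
\item Iterating ends at $P_n$, where $\operatorname{diss}(P_n)=\lceil 2n/3\rceil<\psi\le\operatorname{diss}(T)$.
\item By a discrete intermediate value argument, some tree $T'$ in this sequence has $\operatorname{diss}(T')=\psi$ and $\rho(T')\le\rho(T)<\rho(G)$. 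This contradicts the minimality of $G$.
\end{itemize}
This is where the hypothesis $\psi>\lceil 2n/3\rceil$ is actually used, not in a local counting step. (The paper itself does not reprove this lemma; it quotes it from the literature.)
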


Let $f(t)=t(t^2-1), (t\geq 2)$  and $B_1(t;a,b,c;p,q,r)=D_1(t;a,b,c;p,q,r)+E_1$, where $$D_1(t;a,b,c;p,q,r)=\operatorname{diag}((p+a+1)t-{a}/{t},(q+b+2)t-{b}/{t},(r+c+1)t-{c}/{t})$$ and $$E_1=
\begin{bmatrix}
	0 & 1 & 0 \\
	1 & 0 & 1 \\
	0 & 1 & 0
\end{bmatrix}.$$
For convenience, we will also write $B_1(t;a,b,c;p,q,r)$ and $D_1(t;a,b,c;p,q,r)$ as $B_1(t)$ and $D_1(t)$ when the values of $a,b,c,p,q,r$ are clear from the context.
\begin{lem}\label{14}
Let $\rho$ be the spectral radius of a graph $G=G(a,b,c;p,q,r)$ of order $n\ge 14$. Then  $\rho>2$ and
$$\rho(\rho^2-1)=\lambda_1
\left(B_1(\rho)
\right).$$
Moreover, $\rho$ is the maximum positive real root of $f(t)=\lambda_1(B_1(t))$
\end{lem}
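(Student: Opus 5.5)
Write $x$ for the Perron vector of $G=G(a,b,c;p,q,r)$, and use the eigen-equations $\rho x_u=\sum_{w\sim u}x_w$ to eliminate every vertex except $v_1,v_4,v_7$. The plan first reduces the eigenvalue equation to a $3\times 3$ system and then relates $\rho$ to $\lambda_1(B_1(t))$ via monotonicity in $t$.

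\emph{The claim $\rho>2$.} For $n\ge 14$, $G$ is a tree that is neither a path, nor $W_n$, nor $\tilde W_n$. Indeed, $v_1,v_4,v_7$ have degree at least $3$ once enough pendant vertices are attached, and when some of $a,b,c,p,q,r$ are small the order bound forces another vertex of degree at least $3$, or a $\tilde W$-type subgraph as a proper subgraph. So Lemma~\ref{lemma6}, Lemma~\ref{lemma7} and Lemma~\ref{lemma3} give $\rho>2$. This is a short case check, and it is where $n\ge14$ is used.

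\emph{Reduction to $B_1$.} For a leaf $\ell$ at $v_i$ we have $x_\ell=x_{v_i}/\rho$. For a pendant $P_2$ $v_i\,u\,w$ we have $x_w=x_u/\rho$ and $\rho x_u=x_{v_i}+x_u/\rho$, so $x_u=\rho x_{v_i}/(\rho^2-1)$. On the path segment $v_1v_2v_3v_4$ we solve $\rho x_{v_2}=x_{v_1}+x_{v_3}$ and $\rho x_{v_3}=x_{v_2}+x_{v_4}$, which gives
\[
x_{v_2}=\frac{\rho x_{v_1}+x_{v_4}}{\rho^2-1},\qquad x_{v_3}=\frac{x_{v_1}+\rho x_{v_4}}{\rho^2-1},
\]
and similarly on $v_4v_5v_6v_7$. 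Substitute these into the equation at $v_1$ and multiply by $\rho^2-1$:
\[
\rho(\rho^2-1)x_{v_1}=\big(a(\rho^2-1)/\rho+p\rho+\rho\big)x_{v_1}+x_{v_4}.
\]
Since $a(\rho^2-1)/\rho=a\rho-a/\rho$, the diagonal coefficient is $(p+a+1)\rho-a/\rho$. The equations at $v_4$ and $v_7$ are handled in the same way; $v_4$ has two path neighbours, which gives the term $(q+b+2)\rho$. Hence $y=(x_{v_1},x_{v_4},x_{v_7})^T>0$ satisfies $B_1(\rho)y=f(\rho)y$. Here $\rho^2\neq1$ because $\rho>2$. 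The matrix $B_1(\rho)$ is symmetric, irreducible and nonnegative (its diagonal entries are positive for $\rho>1$), so Lemma~\ref{PF}(3) gives $f(\rho)=\lambda_1(B_1(\rho))$.

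\emph{Maximality of the root.} Any root $t>\rho$ of $f(t)=\lambda_1(B_1(t))$ would, by reversing the substitution, produce a positive vector $x$ with $A_Gx=tx$. All the back-substituted entries are positive for $t>1$. Lemma~\ref{PF}(3) then forces $t=\rho$, a contradiction.

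The main obstacle is doing this reversal carefully. Take the Perron vector $y>0$ of $B_1(t)$ with eigenvalue $f(t)$, define $x$ on the remaining vertices by the same formulas with $t$ in place of $\rho$, and check that every vertex equation of $A_Gx=tx$ holds. This needs $t^2\neq1$ and positivity of the denominators, both of which hold for $t>1$. Alternatively, one can argue that $g(t)=f(t)-\lambda_1(B_1(t))$ is positive for $t$ large and has no root beyond $\rho$. The reversal route, combined with Lemma~\ref{PF}, seems cleanest.
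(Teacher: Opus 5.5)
Your proposal is correct and follows the paper's proof essentially step for step. You eliminate the leaves, the pendant $P_2$'s and the internal path vertices to obtain $B_1(\rho)X'=f(\rho)X'$ with $X'>0$, invoke Lemma~\ref{PF}(3), and rule out a larger root by back-substituting a positive eigenvector of $B_1(t)$ into a positive eigenvector of $A_G$. Your explicit checks fill in details the paper leaves implicit: that $G$ is not a path, $W_n$ or $\tilde W_n$ for $n\ge 14$, that $B_1(\rho)$ is irreducible with positive diagonal, and that the reversed substitution satisfies every vertex equation for $t>1$.
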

\begin{proof}
By Lemma \ref{lemma6} and Lemma \ref{lemma7}, we have $\rho>2$.
For convenience, let $x_i,y_i,z_i,w_i$, with $i=1,2,3$ and $x_{ij}$, with $(i,j)\in\mathcal{I}=\{(1,2),(2,1),(2,3),(3,2)\}$ be the components of the Perron vector $X$ of the graph $G=G(a,b,c;p,q,r)$; see Figure \ref{fig:11}. 
\begin{figure}[H]
	\centering
	\begin{subfigure}{.48\textwidth}
		\centering
		\begin{tikzpicture}
			\draw(0,0)[fill]circle[radius=1.0mm]--(-0.5,1)[fill]circle[radius=1.0mm];
			\draw(1,0)[fill]circle[radius=1.0mm];
			\draw(2,0)[fill]circle[radius=1.0mm];
			\draw(3,0)[fill]circle[radius=1.0mm]--(2.5,1)[fill]circle[radius=1.0mm];
			\draw(3,0)[fill]circle[radius=1.0mm]--(3.5,1)[fill]circle[radius=1.0mm];
			\draw(4,0)[fill]circle[radius=1.0mm];
			\draw(5,0)[fill]circle[radius=1.0mm];
			\draw(6,0)[fill]circle[radius=1.0mm]--(6.5,1)[fill]circle[radius=1.0mm];
			\draw(6,0)[fill]circle[radius=1.0mm]--(5.5,1)[fill]circle[radius=1.0mm];
			\draw(0,0)--(1,0);
			\draw (1,0)--(2,0);
			\draw (2,0)--(3,0);
			\draw (3,0)--(4,0);
			\draw (4,0)--(5,0);
			\draw (5,0)--(6,0);
			\draw(-0.4,-0.8)[fill]circle[radius=1.0mm];\draw(0.4,-0.8)[fill]circle[radius=1.0mm];\draw(-0.4,-1.6)[fill]circle[radius=1.0mm];\draw(0.4,-1.6)[fill]circle[radius=1.0mm];
			\draw(2.6,-0.8)[fill]circle[radius=1.0mm];\draw(3.4,-0.8)[fill]circle[radius=1.0mm];\draw(2.6,-1.6)[fill]circle[radius=1.0mm];\draw(3.4,-1.6)[fill]circle[radius=1.0mm];
			\draw(5.6,-0.8)[fill]circle[radius=1.0mm];\draw(6.4,-0.8)[fill]circle[radius=1.0mm];\draw(5.6,-1.6)[fill]circle[radius=1.0mm];\draw(6.4,-1.6)[fill]circle[radius=1.0mm];
			\draw(0,0)--(-0.4,-0.8);\draw (0,0)--(0.4,-0.8);\draw (3.0,0)--(2.6,-0.8);\draw (3,0)--(3.4,-0.8);\draw (6.0,0)--(5.6,-0.8);\draw (6,0)--(6.4,-0.8);
			\draw(-0.4,-0.8)--(-0.4,-1.6);\draw (0.4,-0.8)--(0.4,-1.6);\draw (2.6,-0.8)--(2.6,-1.6);\draw (3.4,-0.8)--(3.4,-1.6);\draw (5.6,-0.8)--(5.6,-1.6);\draw (6.4,-0.8)--(6.4,-1.6);
			\node at(0,-1.6) {$\dots$};\node at(3,-1.6) {$\dots$};\node at(6,-1.6) {$\dots$};\node[below] at(0,-1.6) {$p$};\node[below] at(3,-1.6) {$q$};\node[below] at(6,-1.6) {$r$};
			\draw(0.5,1)[fill]circle[radius=1.0mm];
			\draw(0,0)--(0.5,1);
            \node at(0.5,1)[right] {$w_1$};\node at(3.5,1)[right] {$w_2$};\node at(6.5,1)[right] {$w_3$};
            \node at(0.4,-0.8)[right] {$y_1$};\node at(3.4,-0.8)[right] {$y_2$};\node at(6.4,-0.8)[right] {$y_3$};
            \node at(0.4,-1.6)[right] {$z_1$};\node at(3.4,-1.6)[right] {$z_2$};\node at(6.4,-1.6)[right] {$z_3$};
            \node at(0,0)[left] {$x_1$};\node at(3.4,0)[above] {$x_2$};\node at(6,0)[right] {$x_3$};
            \node at(1,0)[below] {$x_{12}$};\node at (2,0)[below] {$x_{21}$};\node at(4,0)[below] {$x_{23}$};\node at(5,0)[below] {$x_{32}$};
			\node at(0,1)[above=2pt] {$a$};
			\node at(0,1){$\cdots$};
			\node at(3,1)[above=2pt] {$b$};
			\node at(3,1){$\cdots$};
			\node at(6,1)[above=2pt] {$c$};
			\node at(6,1){$\cdots$};
		\end{tikzpicture}
	\end{subfigure}
	\caption{$G(a,b,c;p,q,r)$ with labeled Perron vector}
	\label{fig:11}
\end{figure}
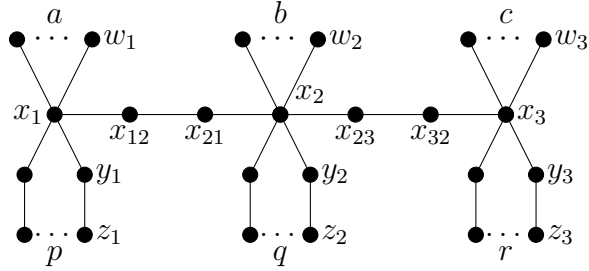
\vspace{-20pt}
By symmetry of the Perron vector's components and   $\rho X=A_GX$, we have
$$\begin{cases}
	\rho y_i=x_i+z_i,\\
	\rho z_i=y_i,\\
	\rho x_{ij}=x_{i}+x_{ji},\\
	\rho w_i=x_i,
\end{cases}$$
for $i=1,2,3$ and $(i,j)\in\mathcal{I}$,
 \text{which lead to} \quad
\begin{equation}\label{eq3.1}
\left\{
\begin{array}{l}
	y_i=\frac{\rho}{\rho^2-1}x_i,\\
	z_i=\frac{1}{\rho^2-1}x_i,\\
	x_{ij}=\frac{\rho x_i+x_j}{\rho^2-1},\\
	 w_i=\frac{x_i}{\rho}.
\end{array}\right.
\end{equation}
Considering the equations in $\rho X=A_GX$ corresponding to the components $x_i, i=1,2,3$, we have
$$\begin{cases}
		\rho x_1 = py_1 + x_{12} + aw_1 = \dfrac{p\rho}{\rho^2-1}x_1 + \dfrac{\rho x_1+x_2}{\rho^2-1} + \dfrac{ax_1}{\rho}, \\
		\rho x_2 = qy_2 + x_{21} + x_{23} + bw_2 = \dfrac{q\rho}{\rho^2-1}x_2 + \dfrac{\rho x_2+x_1}{\rho^2-1} + \dfrac{\rho x_2+x_3}{\rho^2-1} + \dfrac{bx_2}{\rho}, \\
		\rho x_3 = ry_3 + x_{32} + cw_3 = \dfrac{r\rho}{\rho^2-1}x_3 + \dfrac{\rho x_3+x_2}{\rho^2-1} + \dfrac{cx_3}{\rho},
	\end{cases}$$
	which lead to
	$$\rho(\rho^2 - 1)
	\begin{bmatrix}
		x_1 \\
		x_2 \\
		x_3
	\end{bmatrix}
	=
	\begin{bmatrix}
		(p+a+1)\rho - \dfrac{a}{\rho} & 1 & 0 \\
		1 & (q+b+2)\rho - \dfrac{b}{\rho} & 1 \\
		0 & 1 & (r+c+1)\rho - \dfrac{c}{\rho}
	\end{bmatrix}
	\begin{bmatrix}
		x_1 \\
		x_2 \\
		x_3
	\end{bmatrix}.
$$
Let $X'=(x_1,x_2,x_3)^T$.
Since $f(\rho)X'=B_1(\rho)X'$,  $X'$ is an eigenvector of $B_1(\rho)$.
 Recalling that $X$ is the Perron vector of $G(a,b,c;p,q,r)$, we have $X>0$, which leads to $X'>0$.
By Lemma \ref{PF}, $X'$ is an eigenvector corresponding to the eigenvalue $\lambda_1(B_1(\rho))$. Thus, we have $f(\rho)=\lambda_1(B_1(\rho))$.

Now we prove that $\rho$ is the maximum positive real root of $f(t)=\lambda_1(B_1(t))$.
If $\tilde{\rho}>\rho$ satisfies $f(\tilde{\rho})=\lambda_1(B_1(\tilde{\rho}))$,
by Lemma \ref{PF}, there exists $\tilde{X}'=(\tilde{x_1},\tilde{x_2},\tilde{x_3})^T>0$ satisfying $f(\tilde{\rho})\tilde{X}'=\lambda_1(B_1(\tilde{\rho}))\tilde{X}'$.
Similarly as in equation (\ref{eq3.1}), the vector $\tilde{X}'$ can be extended to form an eigenvector $\tilde{X}$ of $A_G$ corresponding to the eigenvalue $\tilde{\rho}$, which contradicts the assumption that $\rho$ is maximum eigenvalue of $A_G$.
\end{proof}

\begin{corollary}\label{15}
  Let $n=6m+l$ with $m\geq 2$ and $0\leq l\leq 5$. Then $\rho^2(G_{m,l})<m+3$.
\end{corollary}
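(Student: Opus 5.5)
The plan is to use the characterization in Lemma \ref{14}: $\rho=\rho(G_{m,l})$ is the largest positive real root of $f(t)=\lambda_1(B_1(t))$. I will show that $f(t)>\lambda_1(B_1(t))$ for every $t\ge \sqrt{m+3}$. Then no root lies in $[\sqrt{m+3},\infty)$, so $\rho<\sqrt{m+3}$, which gives $\rho^2<m+3$.

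\textbf{Bounding the diagonal of $D_1(t)$.} For each of the six graphs $G_{m,l}$, I read off the parameters $(a,b,c;p,q,r)$ from Theorem \ref{theo1} and check three inequalities:
\[
p+a+1\le m+1,\qquad q+b+2\le m+1,\qquad r+c+1\le m+1 .
\]
For example, $q=m-2,\ b=1$ gives $q+b+2=m+1$, and $q=m-1,\ b=0$ also gives $m+1$; the end vertices have $p+a\le m$ and $r+c\le m$ in every case. Since the terms $-a/t,-b/t,-c/t$ are nonpositive for $t>0$, every diagonal entry of $D_1(t)$ is at most $(m+1)t$. All entries of $B_1(t)$ are nonnegative for $t\ge 1$, because $(p+a+1)t-a/t\ge t-a/t+at\ge 0$, and similarly for the other two. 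So Lemma \ref{W_ieq} applies and gives
\[
\lambda_1(B_1(t))\le \lambda_1(D_1(t))+\rho(E_1)\le (m+1)t+\sqrt2 ,
\]
using $\rho(E_1)=\rho(P_3)=\sqrt2$.

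\textbf{The comparison.} For $t\ge\sqrt{m+3}$ we have
\[
f(t)-\big((m+1)t+\sqrt2\big)=t\,(t^2-m-2)-\sqrt2\ \ge\ t-\sqrt2\ \ge\ \sqrt5-\sqrt2>0,
\]
since $t^2-m-2\ge 1$ and $m\ge 2$. Hence $f(t)>\lambda_1(B_1(t))$ on $[\sqrt{m+3},\infty)$, and the claim follows from Lemma \ref{14}.

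\textbf{The expected obstacle.} Lemma \ref{14} is stated for $n\ge 14$, while $m\ge2$ allows $n=12,13$. The derivation of $f(\rho)=\lambda_1(B_1(\rho))$, and the argument that $\rho$ is the largest root, only use $\rho>1$, so that $\rho^2-1>0$ and the substitutions (\ref{eq3.1}) make sense. The condition $\rho>1$ holds because each $G_{m,l}$ contains $P_3$, and Lemma \ref{lemma3} then gives $\rho(G_{m,l})>\rho(P_3)=\sqrt2>1$. I therefore expect to remark that the proof of Lemma \ref{14} goes through verbatim for these small orders. Apart from this, the only work is the routine case check of the six parameter tuples.
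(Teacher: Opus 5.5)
Your proof is correct. It rests on the same core bound as the paper, namely Lemma \ref{14} together with $\lambda_1(B_1(t))\le (m+1)t+\sqrt2$, but it is organized differently.

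The paper applies Lemma \ref{14} only to $G_{m,5}=G(0,0,0;m,m-1,m)$. For that graph $D_1(t)=(m+1)tI$ is scalar, so $\lambda_1(B_1(\rho))=(m+1)\rho+\sqrt2$ holds exactly. Combined with $\sqrt2<\rho$ this gives $\rho^3-\rho<(m+2)\rho$, i.e. $\rho^2<m+3$. The bound then passes to the other five graphs because $G_{m,l}\subseteq G_{m,5}$: each extra leaf at $v_1$, $v_4$ or $v_7$ sits inside one of the pendant $P_2$'s of $G_{m,5}$, so subgraph monotonicity (Lemma \ref{lemma3}) applies.

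You instead run Lemma \ref{14} on each $G_{m,l}$ separately, and replace the exact eigenvalue by Lemma \ref{W_ieq} plus a six-case check that $p+a+1$, $q+b+2$, $r+c+1$ are all at most $m+1$. Those checks are right. Your observation that the proof of Lemma \ref{14} only uses $\rho>1$ (so that $\rho^2-1\neq 0$ and $B_1(\rho)$ is nonnegative and irreducible) is also valid, so the orders $n=12,13$ are legitimately covered.

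The paper's reduction is more economical. It needs one computation and no case analysis, and since $|V(G_{m,5})|=6m+5\ge 17$, the hypothesis $n\ge 14$ of Lemma \ref{14} is met outright. The obstacle you anticipated therefore never arises in the paper's route. Your route avoids the subgraph comparison, but at the cost of the case check and the small-order extension of Lemma \ref{14}.
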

\begin{proof}
 Take $(a,b,c,p,q,r)=(0,0,0,m,m-1,m)$. Then we have   $$G(a,b,c;p,q,r)=G_{m,5} \quad\text{ and }\quad
 B_1(t)=(m+1)tI+E_1.$$  
Let $\rho=\rho(G_{m,5})$. Then by 
  Lemma \ref{14}, we have  $$\rho(\rho^2-1)=\lambda_1(B_1(\rho))= (m+1)\rho+\sqrt{2}<(m+2)\rho,$$ which leads to $\rho^2< m+3$. Since $G_{m,l}\subseteq G_{m,5}$, $\rho^2(G_{m,l})<m+3$.
\end{proof}
\begin{lem}\label{3}
	Let $f(t)$, $h_1(t), h_2(t)$ be  real valued functions for $t\in [a,+\infty)$. Suppose $\rho_i\in [a,+\infty)$ is the maximum root of $f(t)=h_i(t), i=1,2$. If $h_1(t)\geq h_2(t)$ for all $t\in [a,+\infty)$ and $\lim\limits_{n\to\infty}(f(t)-h_i(t))=+\infty$ for $i=1,2$, then $\rho_1\geq \rho_2$.
\end{lem}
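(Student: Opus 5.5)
The natural route is a sign argument on the difference functions $g_i(t)=f(t)-h_i(t)$, $i=1,2$, on $[a,+\infty)$. I will assume, as is implicit in the statement and holds in every application in the paper (where $f(t)=t(t^2-1)$ and $h_i(t)=\lambda_1(B_1(t))$ for various parameters, the largest eigenvalue of a symmetric matrix depending continuously on $t$), that $f,h_1,h_2$ are continuous. I read the hypothesis $\lim_{n\to\infty}$ as $\lim_{t\to+\infty}$. The key observation is that the hypothesis $h_1\ge h_2$ is equivalent to $g_1(t)\le g_2(t)$ for all $t\ge a$.

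The plan is to argue by contradiction. Suppose $\rho_2>\rho_1$.

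First, I claim that $g_1(t)>0$ for every $t>\rho_1$. By definition of $\rho_1$ as the maximum root of $g_1=0$, the function $g_1$ has no zero on $(\rho_1,+\infty)$. Since $g_1(t)\to+\infty$ as $t\to+\infty$, there is some $T>\rho_1$ with $g_1(T)>0$. If $g_1(s)<0$ for some $s>\rho_1$, the intermediate value theorem would give a zero of $g_1$ strictly between $s$ and $T$, hence a root larger than $\rho_1$, which is impossible. So $g_1>0$ on $(\rho_1,+\infty)$.

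Next, evaluate at $\rho_2$, which lies in $(\rho_1,+\infty)$ by assumption. On one hand $g_1(\rho_2)>0$ by the claim. On the other hand $g_1(\rho_2)\le g_2(\rho_2)=0$, since $\rho_2$ is a root of $f=h_2$. This is a contradiction, so $\rho_1\ge\rho_2$.

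The only delicate point is the first step: it needs continuity of $g_1$ so that the intermediate value theorem applies. This is why I state the continuity assumption explicitly. In the intended use it is automatic, because $t\mapsto\lambda_1(B_1(t))$ is continuous and $f$ is a polynomial. Note that the limit hypothesis for $g_2$ is not actually needed; only the one for $g_1$ is used.
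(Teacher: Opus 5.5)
Your proof is correct and follows the same contradiction argument as the paper: if $\rho_1<\rho_2$, then maximality of $\rho_1$ together with $f-h_1\to+\infty$ forces $f(\rho_2)>h_1(\rho_2)\ge h_2(\rho_2)$, which contradicts $f(\rho_2)=h_2(\rho_2)$. The paper asserts $f(\rho_2)>h_1(\rho_2)$ without comment, and your explicit continuity assumption plus the intermediate value theorem is exactly what justifies that step (without continuity the statement can fail), so your version is the more careful one.
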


\begin{proof}
	To the contrary, we suppose $\rho_1<\rho_2$. Since $\rho_1$ is the maximum root of $f(t)=h_1(t)$ and $\lim\limits_{n\rightarrow \infty}(f(t)-h_1(t))=+\infty$,
	we have $f(\rho_1)=h_1(\rho_1)$ and $f(\rho_2)>h_1(\rho_2)\geq h_2(\rho_2)$, which contradicts $f(\rho_2)=h_2(\rho_2)$.
\end{proof}
\section{Proof of Theorem \ref{theo1}}

Denoted by $H(a,b,c;p,q,r)$ the following graph obtained from the graph $W_5$ by attaching $a$ leaves and $p$ edges, $b$ leaves and $q$ edges, $c$ leaves and $r$ edges to the three leaves of $W_5$; see Figure \ref{fig}.
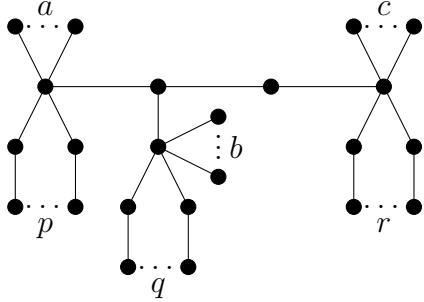
\begin{figure}[H]
	\centering
	\begin{tikzpicture}
		\draw(0,0)[fill]circle[radius=1.0mm];\draw(1.5,0)[fill]circle[radius=1.0mm];\draw(3,0)[fill]circle[radius=1.0mm];\draw(4.5,0)[fill]circle[radius=1.0mm];	
		\draw(0,0)--(1.5,0);\draw (1.5,0)--(3,0);\draw (3,0)--(4.5,0);
		\draw(-0.4,0.8)[fill]circle[radius=1.0mm];\draw(0.4,0.8)[fill]circle[radius=1.0mm];\draw(2.3,-0.4)[fill]circle[radius=1.0mm];\draw(2.3,-1.2)[fill]circle[radius=1.0mm];
		\draw(4.1,0.8)[fill]circle[radius=1.0mm];\draw(4.9,0.8)[fill]circle[radius=1.0mm];
		\draw(-0.4,-0.8)[fill]circle[radius=1.0mm];\draw(0.4,-0.8)[fill]circle[radius=1.0mm];\draw(-0.4,-1.6)[fill]circle[radius=1.0mm];\draw(0.4,-1.6)[fill]circle[radius=1.0mm];
		\draw(1.1,-1.6)[fill]circle[radius=1.0mm];\draw(1.9,-1.6)[fill]circle[radius=1.0mm];\draw(1.1,-2.4)[fill]circle[radius=1.0mm];\draw(1.9,-2.4)[fill]circle[radius=1.0mm];
		\draw(4.1,-0.8)[fill]circle[radius=1.0mm];\draw(4.9,-0.8)[fill]circle[radius=1.0mm];\draw(4.1,-1.6)[fill]circle[radius=1.0mm];\draw(4.9,-1.6)[fill]circle[radius=1.0mm];
		\draw(0,0)--(-0.4,-0.8);\draw (0,0)--(0.4,-0.8);\draw (1.5,-0.8)--(1.1,-1.6);\draw (1.5,-0.8)--(1.9,-1.6);\draw (4.5,0)--(4.1,-0.8);\draw (4.5,0)--(4.9,-0.8);
		\draw(0,0)--(-0.4,0.8);\draw (0,0)--(0.4,0.8);\draw (4.5,0)--(4.1,0.8);\draw (4.5,0)--(4.9,0.8);\draw (1.5,-0.8)--(2.3,-0.4);\draw(1.5,-0.8)--(2.3,-1.2);
		\draw(-0.4,-0.8)--(-0.4,-1.6);\draw (0.4,-0.8)--(0.4,-1.6);\draw (1.1,-1.6)--(1.1,-2.4);\draw (1.9,-1.6)--(1.9,-2.4);\draw (4.1,-0.8)--(4.1,-1.6);\draw (4.9,-0.8)--(4.9,-1.6);
		\node at(0,-1.6) {$\dots$};\node at(1.5,-2.4) {$\dots$};\node at(4.5,-1.6) {$\dots$};\node[below] at(0,-1.6) {$p$};\node[below] at(1.5,-2.4) {$q$};\node[below] at(4.5,-1.6) {$r$};
		\node at(0,0.8) {$\dots$};\node at(2.3,-0.7) {$\vdots$};\node at(4.5,0.8) {$\dots$};\node[above] at(0,0.8) {$a$};\node[right] at(2.3,-0.8) {$b$};\node[above] at(4.5,0.8) {$c$};
		\draw(1.5,-0.8)[fill]circle[radius=1.0mm];\draw(1.5,0)--(1.5,-0.8);
	\end{tikzpicture}
	\caption {$H(a,b,c;p,q,r)$}
	\label{fig}
\end{figure}
\textbf{Definition 1.} The {\it skeleton} of $G$, denoted by $S[G;v_1,v_2,\ldots,v_k]$, is obtained from $G$ by deleting all the components of $G-\{v_1,v_2,\ldots,v_k\}$ that are connected to exactly one vertex of $v_1,v_2,\ldots,v_k$.

Suppose $D$ is a maximum dissociation set of $G$ such that $G[D]$ consists of $\gamma$ isolated vertices $u_1,\dots, u_{\gamma}$ and $\tau$ disjoint edges $u^1_1u^2_1,\ldots, u^1_{\tau}u^2_{\tau}$. Since $G$ is connected, any isolated vertex or edge of $G[D]$ is adjacent to at least one vertex of $V(G)\backslash D$.

\textbf{Definition 2.} The {\it generated hypergraph} of $G=(V,E)$ with respect to $D$, denoted $H[G,D]$, is the hypergraph  $(V',E',\phi)$, where $V'=V(G)\setminus D$ and $E'$ consists of the following three types of hyperedges:\\
\indent (1) the edges in $E$ with both ends in $V'$;\\
\indent (2)  $N(u_i)\cap V' $ with $|N(u_i)\cap V'|\ge 2$, $i=1,2,\dots,\gamma$;\\
\indent (3)  $N(\{u^1_j,u^2_j\})\cap V'$ with $|N(\{u^1_j,u^2_j\})\cap V'|\ge 2$, $j=1,2,\dots,\tau$.

Since $G\in \mathcal{G}_{n,n-3}$, we have $|V(G)\setminus D|=3$. Denote by  $V(G)\setminus D=\{v_1,v_2,v_3\}$. We consider the structure of $H[G,D]$.

\textbf{Claim 1.} {\it Two distinct hyperedges $e,f\in E'$ are incident to at most one common vertex.}

{\it Proof of Claim 1.} Suppose there exist two edges $e,f\in E'$ such that $e$ and $f$ are both incident to two distinct vertices $v_1,v_2$. Then $v_1ev_2fv_1$ is a cycle in $H[G,D]$, which leads to a cycle in $G$. This contradicts  the conclusion that $G$ is a tree.    \qed

\textbf{Claim 2.}  {\it  $H[G, D]$ is not isomorphic to the 2-uniform graph $C_3$.}

{\it Proof of Claim 2.} To the contrary, suppose $e$ is incident to $v_i$ and $v_j$, $f$ is incident to $v_j$ and $v_k$ and $g$ is incident to $v_i$ and $v_k$. Then $v_iev_jfv_kgv_i$ is a cycle in $H[G,D]$, which leads to a cycle in $G$, a contradiction.  \qed

\textbf{Claim 3.}  {\it If $G\in \mathcal{G}_{n,n-3}$,
then one of the following holds.
\begin{itemize}
\item[(i)] There exists $H(a,b,c;p,q,r)$ with $a,b,c\in \{0,1\}$ and $q\geq 1$ such that $\rho(G)\geq \rho(H(a,b,c;\\p,q,r))$;
 \item[(ii)]   there exists $G(a,b,c;p,q,r)$ with $a,b,c\in \{0,1\}$ such that $\rho(G)> \rho(G(a,b,c;p,q,r))$;
   \item[(iii)]    $G\cong G(a,b,c;p,q,r)$ with $a,b,c\in \{0,1\}$.
   \end{itemize}}

{\it Proof of Claim 3.}
Since all components of $G[D]$ are connected to at least one of $v_1,v_2, v_3$ and  $G[D]$ has at least $\lceil (n-3)/2\rceil$ components, we have $$d(v_1)+d(v_2)+d(v_3)\geq \lceil {(n-3)}/{2}\rceil\geq {(n-3)}/{2}.$$ If $d(v_i)\leq 2$ for  some $i\in\{1,2,3\}$, then   there  exists a vertex $v_j\in\{v_1,v_2,v_3\}\setminus \{v_i\}$ with $d(v_j)\geq  ({n-7})/{4}$,
which means $G$ contains a subgraph $S_t$ with $t=({n-7})/{4}$. By Corollary \ref{15}, we have $\rho^2(G_{m,l})< m+3\leq \lfloor {n}/{6}\rfloor+3$, which implies $\rho(G_{m,l})<\rho(G)$ and the claim holds. So we need only to consider the case $d(v_i)\geq 3$ for $i=1,2,3$. We distinguish two cases.

\textbf{Case 1.} There exists an edge $e\in E'$ incident to all the vertices of $V'$.
By Claim 1, $H[G,D]$ contains exactly one edge $e$, which implies that $S[G;v_1,v_2,v_3]$ must be isomorphic to one of the following $S_1,S_2,S_3$.

\begin{figure}[H]
	\centering
    \begin{subfigure}{.3\textwidth}
	\centering
	\begin{tikzpicture}
	\draw(-0.7,0.7)[fill]circle[radius=1.0mm];\draw(-0.7,-0.7)[fill]circle[radius=1.0mm];\draw(0,0)[fill]circle[radius=1.0mm];
    \draw(1,0)[fill]circle[radius=1.0mm];\draw(2,0)[fill]circle[radius=1.0mm];
    \draw(-0.7,0.7)--(0,0);\draw(-0.7,-0.7)--(0,0);\draw(0,0)--(1,0);\draw(1,0)--(2,0);
    \node [left] at (-0.7,-0.7) {$v_1$};\node [left] at (-0.7,0.7) {$v_2$};\node [above] at (2,0) {$v_3$};\node [above] at (0,0) {$u^1_1$};\node [above] at (1,0) {$u^2_1$};
    \node at(0.5,-1.5) {$S_1$};
	\end{tikzpicture}
	\end{subfigure}
    \begin{subfigure}{.3\textwidth}
	\centering
	\begin{tikzpicture}
	\draw(-1,0)[fill]circle[radius=1.0mm];\draw(0,-1)[fill]circle[radius=1.0mm];\draw(0,1)[fill]circle[radius=1.0mm];\draw(0,0)[fill]circle[radius=1.0mm];\draw(1,0)[fill]circle[radius=1.0mm];
    \draw(-1,0)--(0,0);\draw(0,-1)--(0,0);\draw(0,1)--(0,0);\draw(1,0)--(0,0);
    \node [above] at (-1,0) {$v_1$};\node [left] at (0,1) {$v_2$};\node [above] at (1,0) {$v_3$};\node [above] at (0.3,0) {$u^1_1$};\node [right] at (0,-1) {$u^2_1$};
    \node at(0,-1.5) {$S_2$};
	\end{tikzpicture}
	\end{subfigure}
	\begin{subfigure}{.3\textwidth}
	\centering
	\begin{tikzpicture}
	\draw(-0.7,0.7)[fill]circle[radius=1.0mm];\draw(-0.7,-0.7)[fill]circle[radius=1.0mm];\draw(0,0)[fill]circle[radius=1.0mm];\draw(1,0)[fill]circle[radius=1.0mm];
    \draw(-0.7,0.7)--(0,0);\draw(-0.7,-0.7)--(0,0);\draw(0,0)--(1,0);
    \node [left] at (-0.7,-0.7) {$v_1$};\node [left] at (-0.7,0.7) {$v_2$};\node [above] at (1,0) {$v_3$};\node [above] at (0,0) {$u_1$};
    \node at(0,-1.5) {$S_3$};
	\end{tikzpicture}
	\end{subfigure}
\end{figure}

For $S[G;v_1,v_2,v_3]\cong S_2$, we denote by $G'$ the graph obtained from $G$ by subdividing the edge $u^1_1v_1$ and deleting the vertex $u_1^2$;
for $S[G;v_1,v_2,v_3]\cong S_3$, we denote by $G'$ the graph obtained from $G$ by subdividing the edge $u_1v_1$ and deleting a leaf not adjacent to $v_2$. In these two cases we both have $S[G';v_1,v_2,v_3]\cong S_1$, which leads to $G'\cong H(a,b,c;p,q,r)$, since $V\setminus\{v_1,v_2,v_3\}$ forms a dissociation set of $G'$.
Moreover, by Lemma \ref{lemma3} and Lemma \ref{lemma11}, we have $\rho(G')<\rho(G)$.

Noticing that $d_{G'}(v_2)=d_G(v_2)\geq 3$, we have $q+b\geq 2$. By Lemma \ref{lemma2}, the minimum spectral radius of graphs isomorphic to $H(a,b,c;p,q,r)$ of order $n$ is attained at $a,b,c\in\{0,1\}$.
Therefore, we can always find a graph $G'\cong H(a,b,c;p,q,r)$ with $b\in \{0,1\}$ and $q\ge 1$ such that $\rho(G)\ge \rho(G')$.

\textbf{Case 2.}  $H[G,D]$ is 2-uniform. We first prove that $\rho(G)\geq \rho(G(a,b,c;p,q,r))$ for some nonnegative integers $a,b,c,p,q,r$.

Since $G$ is a connected graph, then $H[G,D]$ is a connected graph. By Claim 2, we have $H[G,D]\cong P_3$. Without loss of generality,
we may assume the two edges in $H[G,D]$ are $\{v_1,v_2\}$ and $\{v_2,v_3\}$. Then both $S[G;v_1,v_2]$ and $S[G;v_2,v_3]$  must be isomorphic to one of the following $S',S'_1,S'_2,S'_3$. Notice that $G\cong G(a,b,c;p,q,r) $ for some nonnegative integers $a,b,c,p,q,r$ if and only if $H[G,D]$ is 2-uniform, $H[G,D]\cong P_3$, $S[G;v_1,v_2]\cong S'$ and $S[G;v_2,v_3]\cong S'$.

\begin{figure}[H]
	\centering
    \begin{subfigure}{.24\textwidth}
	\centering
    \vspace{21pt}
	\begin{tikzpicture}
    \draw(-1,0)[fill]circle[radius=1.0mm];\draw(0,0)[fill]circle[radius=1.0mm];\draw(1,0)[fill]circle[radius=1.0mm];\draw(2,0)[fill]circle[radius=1.0mm];
    \draw(-1,0)--(0,0); \draw(0,0)--(1,0); \draw(1,0)--(2,0);
    \node [above] at (-1,0) {$v_1$}; \node [above] at (2,0) {$v_2$};
    \node at (0.5,-1) {$S'$};
	\end{tikzpicture}
	\end{subfigure}
    \begin{subfigure}{.24\textwidth}
	\centering
	\begin{tikzpicture}
    \draw(-1,0)[fill]circle[radius=1.0mm];\draw(0,0)[fill]circle[radius=1.0mm];\draw(1,0)[fill]circle[radius=1.0mm];\draw(0,1)[fill]circle[radius=1.0mm];
    \draw(-1,0)--(0,0); \draw(1,0)--(0,0);\draw(0,0)--(0,1);\node [above] at (-1,0) {$v_1$}; \node [above] at (1,0) {$v_2$};
    \node [above] at (0.3,0) {$u^1_1$}; \node [right] at (0,1) {$u^2_1$};
    \node at (0,-1) {$S'_1$};
	\end{tikzpicture}
	\end{subfigure}
	\begin{subfigure}{.24\textwidth}
	\centering
    \vspace{21pt}
	\begin{tikzpicture}
    \draw(-1,0)[fill]circle[radius=1.0mm];\draw(0,0)[fill]circle[radius=1.0mm];
    \draw(-1,0)--(0,0); \node [above] at (-1,0) {$v_1$}; \node [above] at (0,0) {$v_2$};
    \node at (-0.5,-1) {$S'_2$};
	\end{tikzpicture}
	\end{subfigure}
    \begin{subfigure}{.24\textwidth}
	\centering
    \vspace{21pt}
	\begin{tikzpicture}
	\draw(-1,0)[fill]circle[radius=1.0mm];\draw(0,0)[fill]circle[radius=1.0mm];\draw(1,0)[fill]circle[radius=1.0mm];
    \draw(-1,0)--(0,0);\draw(1,0)--(0,0);\node [above] at (-1,0) {$v_1$}; \node [above] at (1,0) {$v_2$}; \node [above] at (0,0) {$u_1$};
    \node at (0,-1) {$S'_3$};
	\end{tikzpicture}
	\end{subfigure}
\end{figure}

 If $S[G;v_1,v_2]\cong S'_1$, let $G'$ be the graph obtained from $G$ by subdividing the edge $v_1u^1_1$ and deleting $u^2_1$;
if $S[G;v_1,v_2]\cong S'_2$, let $G'$ be the graph obtained from $G$ by subdividing the edge $v_1v_2$ twice and deleting two leaves neither adjacent to $v_2$ nor $v_3$; if
 $S[G;v_1,v_2]\cong S'_3$, let $G'$ be the graph obtained from $G$ by subdividing the edge $v_1u_1$ and deleting one leaf neither adjacent to $v_2$ nor $v_3$. By Lemma \ref{lemma3} and \ref{lem1}, we get $\rho(G')< \rho(G)$. Moreover, we have
 \begin{itemize}
 \item[(i)] $S[G';v_1,v_2]\cong S'$;
 \item[(ii)] $S[G';v_2,v_3]\cong S[G;v_2,v_3]$;
 \item[(iii)] $V-\{v_1,v_2,v_3\}$ is a dissociation set of $G'$;
 \item[(iv)] $d_{G'}(v_i)=d_{G}(v_i)\geq 3,i=2,3$.
 \end{itemize}
Now we consider the structure of $S[G';v_2,v_3]$. If  $S[G';v_2,v_3]\cong S'$, then $G'\cong G(a,b,c;p,q,r) $ for some nonnegative integers $a,b,c,p,q,r$.  If $S[G';v_2,v_3]\not\cong S'$, then similarly as above,   we can always find a graph $G''$ such that $\rho(G'')<\rho(G')$,  $S[G'';v_1,v_2]\cong S'$, $S[G'';v_2,v_3]\cong S'$ and $V-\{v_1,v_2,v_3\}$ is a dissociation set of $G''$,
which leads to $G''\cong G(a,b,c;p,q,r)$ for some nonnegative integers $a,b,c,p,q,r$.

Finally, applying Lemma \ref{lemma2}, we have either (ii) or (iii). \qed
 \par
{\bf Claim 4.} {\it If $G\in \{G(a,b,c;p,q,r): a,b,c,p,q,r\in  \mathbb{N}\}\cup \{H(a,b,c;p,q,r):a,b,c,p,r\in  \mathbb{N}, q\in \mathbb{N}^+\}$ is an $n$-vertex graph with minimum spectral radius, then $G\cong G(a,b,c;p,q,r)$ such that
\begin{itemize}
\item [(i)]\vskip-0.4cm   $a,b,c\in\{0,1\}$;
\item [(ii)] $p+a,q+b+1,r+c$ differ by at most 1 from each other.
\end{itemize}}

{\it Proof of Claim 4.} We distinguish two cases.

\textbf{Case 1.} $G\cong G(a,b,c;p,q,r)$ with $a,b,c\in \{0,1\}$. Let $f(t), B_1(t)$, $D_1(t)$ and $E_1$ be defined as in Section 2.
By Lemma \ref{14}, $\rho=\rho(G)$ is the maximum positive real root of $f(t)-\lambda_1(B_1(t))=0$ and $f(t)=O(t^3)$, $\lambda_1(B_1(t))=O(t)$ as $t\rightarrow +\infty$.
By Lemma \ref{3}, if $\rho(B_1(t;a,b,c;p,q,r))\geq \rho(B_1(t;a',b',c';p',q',r'))$, then $$\rho(G(a,b,c;p,q,r))\geq \rho(G(a',b',c';p',q',r')).$$
Since $D_1(t),E_1$ are symmetric nonnegative matrices, by Lemma \ref{W_ieq}, we have
\begin{equation*}
\begin{aligned}
	&\max\{(p+a+1),(q+b+2),(r+c+1)\}t-\frac{1}{t} \leq \lambda_1(B_1(t)) \\&\leq \max\{(p+a+1),(q+b+2),(r+c+1)\}t+\sqrt{2}.
\end{aligned}
\end{equation*}
Now we can conclude that  if $G\in \{G(a,b,c;p,q,r):a,b,c,p,q,r \in \mathbb{N}\}$ attaining the minimum spectral radius, then $p+a,q+b+1,r+c$ differ by at most 1 from each other.  \par

\textbf{Case 2.} $G\cong H(a,b,c;p,q,r)$ with $a,b,c\in\{0,1\}$ and $q\geq 1$.

\begin{figure}[H]
	\centering
	\begin{tikzpicture}
	\draw(0,0)[fill]circle[radius=1.0mm];\draw(1.5,0)[fill]circle[radius=1.0mm];\draw(3,0)[fill]circle[radius=1.0mm];\draw(4.5,0)[fill]circle[radius=1.0mm];	
    \draw(0,0)--(1.5,0);\draw (1.5,0)--(3,0);\draw (3,0)--(4.5,0);
    \draw(-0.4,0.8)[fill]circle[radius=1.0mm];\draw(0.4,0.8)[fill]circle[radius=1.0mm];\draw(2.3,-0.4)[fill]circle[radius=1.0mm];\draw(2.3,-1.2)[fill]circle[radius=1.0mm];
    \draw(4.1,0.8)[fill]circle[radius=1.0mm];\draw(4.9,0.8)[fill]circle[radius=1.0mm];
    \draw(-0.4,-0.8)[fill]circle[radius=1.0mm];\draw(0.4,-0.8)[fill]circle[radius=1.0mm];\draw(-0.4,-1.6)[fill]circle[radius=1.0mm];\draw(0.4,-1.6)[fill]circle[radius=1.0mm];
    \draw(1.1,-1.6)[fill]circle[radius=1.0mm];\draw(1.9,-1.6)[fill]circle[radius=1.0mm];\draw(1.1,-2.4)[fill]circle[radius=1.0mm];\draw(1.9,-2.4)[fill]circle[radius=1.0mm];
    \draw(4.1,-0.8)[fill]circle[radius=1.0mm];\draw(4.9,-0.8)[fill]circle[radius=1.0mm];\draw(4.1,-1.6)[fill]circle[radius=1.0mm];\draw(4.9,-1.6)[fill]circle[radius=1.0mm];
    \draw(0,0)--(-0.4,-0.8);\draw (0,0)--(0.4,-0.8);\draw (1.5,-0.8)--(1.1,-1.6);\draw (1.5,-0.8)--(1.9,-1.6);\draw (4.5,0)--(4.1,-0.8);\draw (4.5,0)--(4.9,-0.8);
    \draw(0,0)--(-0.4,0.8);\draw (0,0)--(0.4,0.8);\draw (4.5,0)--(4.1,0.8);\draw (4.5,0)--(4.9,0.8);\draw (1.5,-0.8)--(2.3,-0.4);\draw(1.5,-0.8)--(2.3,-1.2);
    \draw(-0.4,-0.8)--(-0.4,-1.6);\draw (0.4,-0.8)--(0.4,-1.6);\draw (1.1,-1.6)--(1.1,-2.4);\draw (1.9,-1.6)--(1.9,-2.4);\draw (4.1,-0.8)--(4.1,-1.6);\draw (4.9,-0.8)--(4.9,-1.6);
    \node at(0,-1.6) {$\dots$};\node at(1.5,-2.4) {$\dots$};\node at(4.5,-1.6) {$\dots$};\node[below] at(0,-1.6) {$p$};\node[below] at(1.5,-2.4) {$q$};\node[below] at(4.5,-1.6) {$r$};
    \node at(0,0.8) {$\dots$};\node at(2.3,-0.7) {$\vdots$};\node at(4.5,0.8) {$\dots$};\node[above] at(0,0.8) {$a$};\node[right] at(2.3,-0.8) {$b$};\node[above] at(4.5,0.8) {$c$};
    \draw(1.5,-0.8)[fill]circle[radius=1.0mm];\draw(1.5,0)--(1.5,-0.8);
    \node at(0,0)[left] {$x_1$};\node at(1.5,0)[above] {$x'_{12}$};\node at(3,0)[above] {$x'_3$};\node at(4.5,0)[right] {$x_3$};\node at(1.5,-0.8)[left] {$x_2$};
    \node at(-0.4,0.8)[left] {$w_1$};\node at(-0.4,-0.8)[left] {$y_1$};\node at(-0.4,-1.6)[left] {$z_1$};\node at(1.9,-1.6)[right] {$y_2$};\node at(1.9,-2.4)[right] {$z_2$};
    \node at(2.3,-1.3)[right] {$w_2$};\node at(4.9,0.8)[right] {$w_3$};\node at(4.9,-0.8)[right] {$y_3$};\node at(4.9,-1.6)[right] {$z_3$};
	\end{tikzpicture}
    \caption{$H(a,b,c;p,q,r)$ with labeled Perron vector}
	\label{fig:12}
\end{figure}
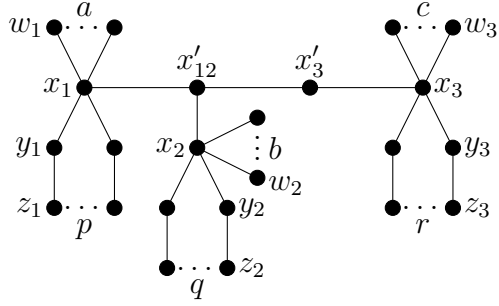
\vspace{-0.3cm}
For convenience, let $x_i,y_i,z_i,w_i,i=1,2,3$ and $x'_{S}, S\in\{\{12\},\{3\}\}
$ be the coordinates of the Perron vector $X$ of the graph $H(a,b,c;p,q,r)$; See Figure \ref{fig:12}. Suppose $\rho(G)=\rho$. By symmetry of the Perron vector's components and   $\rho X=A_GX$, we have
\begin{equation*}\label{eq3.2}
\left\{
\begin{array}{l}
\rho y_i=x_i+z_i,\\
\rho z_i=y_i,\\
\rho x'_{S}=\sum_{i\in S}x_i+x'_{[3]\setminus S},\\
\rho w_i=x_i,
\end{array}
\right.
\end{equation*}
for $i=1,2,3$, $(i,j)\in\mathcal{I}$ and $S\in\{\{12\},\{3\}\}$,
\text{which lead to} \quad
$$\left\{
\begin{array}{l}
	y_i=\frac{\rho}{\rho^2-1}x_i,\\
	z_i=\frac{1}{\rho^2-1}x_i,\\
	x'_{S}=\frac{\rho (\sum_{i\in S}x_i)+\sum_{j\in [3]\setminus S}x_j}{\rho^2-1},\\
	w_i=\frac{x_i}{\rho}.
\end{array}\right.$$
Considering the characteristic equations corresponding to the components  $x_i, i=1,2,3$, we have
$$\begin{cases}
	\rho x_1=py_1+x'_{12}+aw_1=\frac{p\rho}{\rho^2-1}x_1+\frac{\rho (x_1+x_2)+x_3}{\rho^2-1}+\frac{ax_1}{\rho},\\
	\rho x_2=qy_2+x'_{12}+bw_2=\frac{q\rho}{\rho^2-1}x_2+\frac{\rho (x_1+x_2)+x_3}{\rho^2-1}+\frac{bx_2}{\rho},\\
	\rho x_3=ry_3+x'_3+cw_3=\frac{r\rho}{\rho^2-1}x_3+\frac{ x_1+x_2+\rho x_3}{\rho^2-1}+\frac{cx_3}{\rho},
	\end{cases}$$
which lead to
$$\rho(\rho^2-1)
\begin{bmatrix}
x_1 \\
x_2 \\
x_3
\end{bmatrix}
=
\begin{bmatrix}
(p+a+1)\rho-\frac{a}{\rho} & \rho & 1 \\
\rho & (q+b+1)\rho-\frac{b}{\rho} & 1 \\
1 & 1 & (r+c+1)\rho-\frac{c}{\rho}
\end{bmatrix}
\begin{bmatrix}
x_1 \\
x_2 \\
x_3
\end{bmatrix}.
$$

Let $$B_2(t;a,b,c;p,q,r)=
\begin{bmatrix}
(p+a+1)\rho-\frac{a}{\rho} & \rho & 1 \\
\rho & (q+b+1)\rho-\frac{b}{\rho} & 1 \\
1 & 1 & (r+c+1)\rho-\frac{c}{\rho}
\end{bmatrix},$$
which we also write as $B_2(t)$.  Again, let $f(t)=t(t^2-1)$ with $t\geq 2$.
Following an analogous approach to Case 1, we can conclude  that $\rho$ is the maximum positive real root of $f(t)=\lambda_1(B_2(t))$.

Since each entry of $B_2(t;a,b,c;p,q,r)$ is greater than or equal to the corresponding entry of $B_1(t;a,b,c;p,q-1,r)$, we have $\lambda_1(B_2(t;a,b,c;p,q,r))>\lambda_1(B_1(t;a,b,c;p,q-1,r))$, which leads to $\rho(H(a,b,c;p,q,r))>\rho(G(a,b,c;p,q-1,r))$.
Hence, we can conclude that Claim 4 holds. \qed\\
\par By Claim 3 and Claim 4, the minimum spectral radius in $\mathcal{G}_{n,n-3}$ is attained at a graph $G(a,b,c;p,q,r)$ satisfying the conditions of Claim 4. So it suffices to characterize  the graphs $G(a,b,c;p,q,r)$ satisfying  conditions of Claim 4 that attain  the minimum spectral radius.

For any $G\cong G(a,b,c;m+m_1,m+m_2,m+m_3)$, $m_1,m_2,m_3\in\{-3,-2,-1,0,1\}$, we have
  \begin{align}
  	& \left(\begin{array}{ccc}
  	(m+m_1+a+1)t-\frac{a}{t} & 1 & 0 \\
  	1 & (m+m_2+b+2)t-\frac{b}{t} & 1 \\
  	0 & 1 & (m+m_3+c+1)t-\frac{c}{t}
  	\end{array}\right)\nonumber\\&=t (m+1) I+\left(\begin{array}{ccc}
  		(m_1+a)t-\frac{a}{t} & 1 & 0 \\
  		1 & (m_2+b+1)t-\frac{b}{t} & 1 \\
  		0 & 1 & (m_3+c)t-\frac{c}{t}
  	\end{array}\right)\label{equ2}\\&\equiv t (m+1) I+A_{m_1,m_2,m_3}^{a,b,c}\nonumber
  \end{align}
  By Claim 3 and equation (\ref{equ2}), for any  possible $G(a,b,c;m+m_1,m+m_2,m+m_3)$, we compute the characteristic polynomial of the corresponding matrix $A_{m_1,m_2,m_3}^{a,b,c}$ of  $G(a,b,c;m+m_1,m+m_2,m+m_3)$ so that we can compare the spectral radius of $G(a,b,c;m+m_1,m+m_2,m+m_3)$ to identify the graph $G$ attaining the minimum spectral radius.

Recall that $m=\left\lfloor {n}/{6}\right\rfloor$ with $n\ge 39$. Suppose $G\cong G(a,b,c;p,q,r)$ attains the minimum spectral radius. We distinguish six cases.\\
  \textbf{Case 1.} $n=6m$. By Claim 4 we have $$G\in\left\{\begin{array}{ll}G(1,0,0;m-1, m-2, m-1),&G(0,1,0;m-1, m-2, m-1),\\G(0,1,0;m-1, m-3, m),&G(1,0,0;m-2, m-2, m),\\G(1,0,0;m-2, m-1, m-1),&G(1,1,1;m-1, m-3, m-1),\\G(1,1,1;m-2, m-2, m-1)\\\end{array}
  	\right\}.$$
  By  equation (\ref{equ2}),  we have
  \vskip -0.5cm
 \begin{align*}
  	& \operatorname{det}\left(\lambda I-A_{-1,-2,-1}^{1,0,0}\right)=\lambda^3+\left(2 t+\frac{1}{t}\right) \lambda^2+t^2 \lambda-\frac{1}{t}\equiv f_1(\lambda), \\
  	& \operatorname{det}\left(\lambda I-A^{0,1,0}_{-1,-2,-1}\right)=\lambda^3+\left(2 t+\frac{1}{t}\right) \lambda^2+t^2 \lambda-t\equiv f_2(\lambda),   \\
  	& \operatorname{det}\left(\lambda I-A^{0,1,0}_{-1,-3,0}\right)=\lambda^3+\left(2 t+\frac{1}{t}\right) \lambda^2+\left(t^2-1\right) \lambda-t\equiv f_3(\lambda),    \\
  	& \operatorname{det}(\lambda I-A_{-2,-2,0}^{1,0,0})=\lambda^3+\left(2 t+\frac{1}{t}\right) \lambda^2+\left(t^2-1\right) \lambda-t-\frac{1}{t}\equiv f_4(\lambda), \\
  	& \operatorname{det}\left(\lambda I-A_{-2,-1,-1}^{1,0,0}\right)=\lambda^3+\left(2 t+\frac{1}{t}\right) \lambda^2+\left(t^2-1\right) \lambda-2 t-\frac{1}{t} \equiv f_5(\lambda),\\
  	& \operatorname{det}\left(\lambda I-A_{-1,-3,-1}^{1,1,1}\right)=\lambda^3+\left(t+\frac{3}{t}\right) \lambda^2+\frac{3}{t^2} \lambda-\frac{1}{t}+\frac{1}{t^3}  \equiv f_6(\lambda),\\
  	& \operatorname{det}\left(\lambda I-A_{-2,-2,-1}^{1,1,1}\right)=\lambda^3+\left(t+\frac{3}{t}\right) \lambda^2+\frac{3}{t^2} \lambda-t-\frac{1}{t}+\frac{1}{t^3} \equiv f_7(\lambda).
   \end{align*}
  Notice that
  $$f_1(\lambda)>f_2(\lambda)>f_3(\lambda)>f_4(\lambda)>f_5(\lambda)\quad \text{and}\quad f_6(\lambda)>f_7(\lambda).$$
  By Lemma \ref{lemm13}, the minimum spectral radius of $G$ is the maximum real root  of $f_1(\lambda)$ or the maximum real root of $ f_6(\lambda)$.
  Since $f_1(0)<0$, $f_6(0)<0$, $f_1(1/t^3)>0$, $f_6(1/t^3)<0$, and $f_1(\lambda)$, $f_6(\lambda)$ are both increasing for $\lambda>0$, we conclude that
  $$G\cong G(1,0,0;m-1, m-2, m-1).$$
   \textbf{Case 2. }$n=6m+1$. By Claim 4 we have
  $$G\in\left\{\begin{array}{ll}G(1,0,1;m-1,m-2,m-1),&G(1,0,1;m-2,m-1,m-1),\\G(1,1,0;m-1,m-2,m-1),&G(1,1,0;m-2,m-2,m),\\G(0,0,0;m-1,m-1,m-1),&G(0,0,0;m-1,m-2,m)\\\end{array}
  \right\}.$$
  By  equation (\ref{equ2}),  we have
  \begin{align*}
  		& \operatorname{det}\left(\lambda I-A_{-1,-2,-1}^{1,0,1}\right)=\lambda^3+\left(t+\frac{2}{t}\right) \lambda^2+\frac{1}{t^2} \lambda-\frac{1}{t}\equiv g_1(\lambda),	\\
  		& \operatorname{det}\left(\lambda I-A_{-2,-1,-1}^{1,0,1}\right)=\lambda^3+\left(t+\frac{2}{t}\right) \lambda^2+\left(\frac{1}{t^2}-1\right) \lambda-t-\frac{2}{t},\\
  		& \operatorname{det}\left(\lambda I-A_{-1,-2,-1}^{1, 1,0}\right)=\lambda^3+\left(t+\frac{2}{t}\right) \lambda^2+\frac{1}{t^2} \lambda-t,\\
  		& \operatorname{det}\left(\lambda I-A_{-2,-2,0}^{1,1,0}\right)=\lambda^3+\left(t+\frac{2}{t}\right) \lambda^2+(\frac{1}{t^2}-1) \lambda-\frac{1}{t}-t,\\
  		& \operatorname{det}\left(\lambda I-A_{-1,-2,0}^{0,0,0}\right)=\lambda^3+2 t \lambda^2+\left(t^2-2\right) \lambda-t\equiv g_2(\lambda), \\
  		& \operatorname{det}\left(\lambda I-A_{-1,-1,-1}^{0,0,0}\right)=\lambda^3+2 t \lambda^2+\left(t^2-2\right) \lambda-2 t.
  		\end{align*}
Notice that $g_1(0)<0$, $g_1\left(1/t\right)=4/t^3>0$,  $g_2(0)<0$,
  		$g_2\left(1/t\right)=1/t^3>0$, $g_1(\lambda)$, $g_2(\lambda)$ are both increasing for $\lambda>0$. Moreover, since $g_1(\lambda)-g_2(\lambda)$ is monotonically decreasing in $\lambda\in(0,1/t)$, we have  $g_1(\lambda)-g_2(\lambda)>0$ for $\lambda\in(0,1/t)$.   Similarly as in Case 1, by Lemma \ref{lemm13} and Lemma \ref{lemma14} we can conclude that
  	$$G\cong G(1,0,1;m-1,m-2,m-1).$$
  	 \textbf{Case 3. }$n=6m+2$. By Claim 4 we have
   $$G\in\left\{\begin{array}{ll}G(1,0,0;m-1, m-2, m),&G(0,1,0;m-1, m-2, m),\\G(1,0,0;m-1, m-1, m-1),&G(1,1,1;m-1, m-2, m-1),\\G(0,1,0;m, m-3, m),&G(1,0,0;m-2, m-1, m)\\\end{array}
   \right\}.$$
    By  equation (\ref{equ2}),  we have
  	 \begin{align*}
  	 	& \operatorname{det}\left(\lambda I-A_{-1,-2,0}^{1,0,0}\right)=\lambda^3+\left(t+\frac{1}{t}\right) \lambda^2-\lambda-\frac{1}{t} \equiv g_3(\lambda),\\
  	 	& \operatorname{det}\left(\lambda I-A_{-1,-2,0}^{0,1,0}\right)=\lambda^3+\left(t+\frac{1}{t}\right) \lambda^2-\lambda-t, \\
  	 	& \operatorname{det}\left(\lambda I-A_{-1,-1,-1}^{1,0,0}\right)=\lambda^3+\left(t+\frac{1}{t}\right) \lambda^2-\lambda-t-\frac{1}{t}, \\
  	 	 &\operatorname{det}\left(\lambda I-A_{-1,-2,-1}^{1,1,1}\right)=\lambda^3+\frac{3}{t} \lambda^2+\left(\frac{3}{t^2}-2\right) \lambda-\frac{2}{t}+\frac{1}{t^3},\\
  	 	& \operatorname{det}\left(\lambda I-A_{0,-3,0}^{0,1,0}\right)=\lambda^3+\left(t+\frac{1}{t}\right) \lambda^2-2 \lambda \equiv g_4(\lambda),\\
  	 	& \operatorname{det}\left(\lambda I-A_{-2,-1,0}^{1,0,0}\right)=\lambda^3+\left(t+\frac{1}{t}\right) \lambda^2-2 \lambda-t-\frac{1}{t}.
  \end{align*}
  Notice that $g_3(1/t)=g_4(1/t)<0$, $g_3(\lambda)$, $g_4(\lambda)$ are both increasing for $\lambda>1/t$ and $g_3(\lambda)-g_4(\lambda)>0$ for $\lambda>1/t$.	Similarly as in Case 1, by Lemma \ref{lemm13} we conclude that  $$G\cong G(1,0,0;m-1,m-2,m).$$
  	\textbf{Case 4.} $n=6m+3$. By Claim 4 we have
  $$G\in\left\{\begin{array}{ll}G(0,0,0;m,m-2,m),&G(0,0,0;m-1,m-1,m),\\G(1,1,0;m-1,m-2,m),&G(1,0,1;m-1,m-1,m-1)\\\end{array}
  \right\}.$$
   By  equation (\ref{equ2}),  we have
  	 \begin{align*}
  			& \operatorname{det}\left(\lambda I-A_{0,-2,0}^{0,0,0}\right)=\lambda^3+t \lambda^2-2 \lambda \equiv g_5(\lambda),\\
  	 		& \operatorname{det}\left(\lambda I-A_{-1,-1,0}^{0,0, 0}\right)=\lambda^3+t \lambda^2-2 \lambda-t, \\
  	 		& \operatorname{det}\left(\lambda I-A_{-1,-2,0}^{1,1,0}\right)=\lambda^3+\frac{2}{t} \lambda^2+\left(\frac{1}{t^2}-2\right) \lambda-\frac{1}{t}\equiv g_6(\lambda), \\
  	 		& \operatorname{det}\left(\lambda I-A_{-1,-1,-1}^{1,0,1}\right)=\lambda^3+\frac{2}{t} \lambda^2+\left(\frac{1}{t^2}-2\right) \lambda-\frac{2}{t}.  \end{align*}
  	 	Similarly as in Case 1, by Lemma \ref{lemm13} since $g_5(1)>0$, $g_6(1)<0$ and $g_5(\lambda)$, $g_6(\lambda)$ are both increasing for $\lambda>1$, we conclude that  $$G\cong G(0,0,0;m,m-2,m).$$
  	 \textbf{Case 5.} $n=6m+4$. By Claim 4 we have
  	 $$G\in\left\{\begin{array}{ll}G(0,1,0;m, m-2, m),&G(1,0,0;m-1, m-1, m),\\G(1,1,1;m, m-2, m-1),&G(1,1,1;m-1, m-1, m-1)\\\end{array}\right\}.$$
  	 By  equation (\ref{equ2}),  we have
  	 \begin{align*}
  	 	& \operatorname{det}\left(\lambda I-A^{0,1,0}_{0,-2,0}\right)=\lambda^3+\frac{1}{t} \lambda^2-2 \lambda \equiv g_7(\lambda),\\
  	 	& \operatorname{det}\left(\lambda I-A_{-1,-1,0}^{1, 0,0}\right)=\lambda^3+\frac{1}{t} \lambda^2-2 \lambda-\frac{1}{t}, \\
  	 	& \operatorname{det}\left(\lambda I-A_{0,-2,-1}^{1, 1,1}\right)=\lambda^3+\left(\frac{3}{t}-t\right) \lambda^2+\left(\frac{3}{t^2}-4\right) \lambda+t-\frac{3}{t}+\frac{1}{t^3}\equiv g_8(\lambda),  \\
  	 	& \operatorname{det}\left(\lambda I-A_{-1,-1,-1}^{1,1,1}\right)=\lambda^3+\left(\frac{3}{t}-t\right) \lambda^2+\left(\frac{3}{t^2}-4\right) \lambda-\frac{3}{t}+\frac{1}{t^3}.  \end{align*}
  	 Similarly as in Case 1, by Lemma \ref{lemm13} since  $ g_7(1)<0$, $g_8(1)<0$ and $g_7(\lambda)-g_8(\lambda)>0$ for $\lambda>1$, we conclude that $$G\cong G(0,1,0;m,m-2,m).$$
  	 \textbf{Case 6.}  $n=6m+5$. By Claim 4 we have
  	 $$G\in\left\{\begin{array}{ll}G(0,0,0;m,m-1,m),&G(1,1,0;m,m-2,m),\\G(1,0,1;m-1,m,m-1),&G(1,0,1;m,m-1,m-1),\\G(1,1,0;m-1,m-1,m),&G(1,1,0;m-1,m-2,m+1)\\\end{array}\right\}.$$  By  equation (\ref{equ2}),  we have
  	 	\begin{align*}
  	 			&\operatorname{det}\left(\lambda I-A_{0,-1,0}^{0,0, 0}\right)=\lambda^3-2\lambda\equiv g_{9}(\lambda),\\
  	 		& \operatorname{det}\left(\lambda I-A_{0,-2,0}^{1,1,0}\right)=\lambda^3+\left(\frac{2}{t}-t\right) \lambda^2+\left(\frac{1}{t^2}-3\right) \lambda+t-\frac{1}{t}\equiv g_{10}(\lambda), \\
  	 	& \operatorname{det}\left(\lambda I-A_{-1,0,-1}^{1,0,1}\right)=\lambda^3+\left(\frac{2}{t}-t\right) \lambda^2+\left(\frac{1}{t^2}-4\right) \lambda-\frac{3}{t}, \\
  	 	& \operatorname{det}\left(\lambda I-A_{	0,-1,-1}^{1,0,1}\right)=\lambda^3+\left(\frac{2}{t}-t\right) \lambda^2+\left(\frac{1}{t^2}-3\right) \lambda+t-\frac{2}{t} ,\\
  	 	& \operatorname{det}\left(\lambda I-A_{-1,-1,0}^{1, 1,0}\right)=\lambda^3+\left(\frac{2}{t}-t\right) \lambda^2+\left(\frac{1}{t^2}-3\right) \lambda-\frac{1}{t} ,\\
  	 	& \operatorname{det}\left(\lambda I-A_{-1,-2,1}^{1,1,0}\right)=\lambda^3+\left(\frac{2}{t}-t\right) \lambda^2+\left(\frac{1}{t^2}-4\right) \lambda+t-\frac{2}{t}.
  	 \end{align*}
 Since $g_{10}(t)=0$, we have $ \lambda_1\left(A_{0,-1,0}^{0, 0,0}\right)<\lambda_1\left(A_{0,-2,0}^{1,1,0}\right)$. Similarly as in Case 1, by Lemma \ref{lemm13} we conclude that $$G\cong G(0,0,0;m,m-1,m).$$
 \par Combine all the above cases we completes the proof of Theorem \ref{theo1}. \qed
 \par
\begin{rema}
	Suppose $G$ attains the minimum spectral radius in $\mathcal{G}_{n,n-3}$. If $n=5$, $G\cong C_4\vee K_1$ (See graph 1.13 in the page 273 of \cite{Cvetkovi1995}). If $n=6$, $G\cong K_{3,3}-e$, where $e$ is an edge of $K_{3,3}$ (See graph 74 in \cite{CP}). If $n\in\{7,8\}$, $n-3=\left\lfloor\frac{2n}{3}\right\rfloor$ and $G\cong C_n$. If $n\in\{9,10,11\}$, $n-3=\left\lceil\frac{2n}{3}\right\rceil$ and $G\cong  P_n$. For the case   $12\le n\le39$, , a more detailed structural analysis could be added to prove that the extremal graphs also satisfy Theorem \ref{theo1}. This analysis was omitted to maintain the readability of the paper.
\end{rema}
\section*{Acknowledgement}
This work was supported by the National Natural Science Foundation of China (No. 12171323).

\end{document}